\definecolor{nicered}{RGB}{204,0,0}
\definecolor{lightblue}{RGB}{153,204,255}
\definecolor{nicegreen}{RGB}{0,153,0}
\title{Finding Minimum Matching Cuts in $H$-free Graphs} 
\titlerunning{Finding Minimum Matching Cuts} 
\author{Felicia Lucke}{LIP, ENS Lyon, France}{felicia.lucke@ens-lyon.fr}{https://orcid.org/0000-0002-9860-2928}{Supported by the EPSRC
(Grant No.\ EP/X01357X/1) and Postdoc.Mobility Grant 230578.}
\author{Joseph Marchand}{ENS Paris-Saclay, France}{}{}{}
\author{Jannik Olbrich}{Ulm University, Germany}{jannik.olbrich@uni-ulm.de}{https://orcid.org/0000-0003-3291-7342}{Supported by the Deutsche Forschungsgemeinschaft (DFG) (Grant No.\ OH 53/7-1).}
\authorrunning{F.~Lucke, J.~Marchand and J.~Olbrich}
\keywords{minimum matching cut, $H$-free graph, diameter, radius.}
\tikzstyle{vertex}=[thin,circle,inner sep=0.cm, minimum size=1.7mm, fill=black, draw=black]
 \tikzstyle{svertex}=[thin,circle,inner sep=0.cm, minimum size=1.3mm, fill=black, draw=black]
 \tikzstyle{bvertex}=[thin,circle,inner sep=0.cm, minimum size=1.7mm, fill=lightblue, draw=lightblue]
 \tikzstyle{rvertex}=[thin,circle,inner sep=0.cm, minimum size=1.7mm, fill=nicered,draw=nicered]
 \tikzstyle{evertex}=[thin,circle,inner sep=0.cm, minimum size=1.7mm, fill=none,draw=black]
 \tikzstyle{edge}=[thick, draw = gray]
 \tikzstyle{tedge}=[ultra thick, draw = black]
 \tikzstyle{tredge}=[ultra thick, draw = nicered]
 \tikzstyle{tbedge}=[ultra thick, draw=lightblue]
 \tikzstyle{redge}=[thick, draw = nicered]
 \tikzstyle{bedge}=[thick, draw = lightblue] 
 \tikzstyle{gedge}=[thick, draw = nicegreen] 
 \tikzstyle{brace} = [decorate, ultra thick, decoration = {calligraphic brace}]
 \tikzstyle{wiggly} = [decorate, decoration = snake, thick, draw = gray,]
\newcommand{\NP}{\textsf{NP}}
\newcommand{\p}{\textsf{P}}
\newcommand{\mc}{{\sc Matching Cut}}
\newcommand{\pmc}{{\sc Perfect Matching Cut}}
\newcommand{\dpm}{{\sc Disconnected Perfect Matching}}
\newcommand{\maxmc}{{\sc Maximum Matching Cut}}
\newcommand{\minmc}{{\sc Minimum Matching Cut}}
\newcommand{\dc}{{\sc $d$-Cut}}
\newcommand{\matchmultc}{{\sc Matching Multi-Cut}}
\newcommand{\maxcut}{{\sc Max Cut}}
\newcommand{\mincut}{{\sc Min Cut}}
\newcommand{\degree}{\mathrm{degree}\xspace}
\newcommand{\dist}{\mathrm{dist}\xspace}
\newcommand{\diam}{\mathrm{diam}\xspace}
\theoremstyle{claimstyle}
\newtheorem{myclaim}{Claim}[theorem]
\newcommand{\clapp}[2]{{ \medskip \noindent $\vartriangleright$ {\sffamily Claim #1.{#2}.}}}
\newcommand\blfootnote[1]{%
    \bgroup
    \renewcommand\thefootnote{\fnsymbol{footnote}}%
    \renewcommand\thempfootnote{\fnsymbol{mpfootnote}}%
    \footnotetext[0]{#1}%
    \egroup
}
\begin{document}
\maketitle              
\begin{abstract}
A matching cut is a matching that is also an edge cut. In the problem \minmc{}, we ask for a matching cut with the minimum number of edges in the matching. We investigate the differences in complexity between \minmc, its counterpart \maxmc, and the decision problem \mc. Our polynomial-time algorithms for $P_8$-free, $S_{1,1,3}$-free and $(P_6 + P_4)$-free graphs extend the cases where \minmc{} and \maxmc{} are known to differ in complexity. In addition, they solve open cases for the well-studied problem \mc{}. The \NP-hardness proof for $3P_3$-free graphs implies that \minmc{} and \mc{}, which is polynomial-time solvable even for $sP_3$-free graphs, for any $s \geq 1$, differ in complexity on certain graph classes. Further, we give complexity dichotomies for both general and bipartite graphs of bounded radius and diameter.

\end{abstract}

\section{Introduction}
\label{sec:intro}

Given a graph $G$, a \emph{matching} is a set of disjoint edges. For a partition of the vertex set $V(G) = A \cup B$, the set of edges with one endvertex in $A$ and one in $B$ is an \emph{edge cut} of $G$. A \emph{matching cut} $M$ is a set of edges which is a matching and an edge cut. The number of edges $|M|$ is the \emph{size} of a matching cut.

The problem \mc, where we ask whether a given graph has a matching cut dates back to 1970 when Graham introduced it to prove a result on cube numbering~(\cite{Gr70}). Both polynomial-time algorithms and \NP-completeness results have been published for many different graph classes such as chordal graphs~(\cite{Mo89}), graphs of bounded diameter~(\cite{BJ08}), graphs of bounded degree~(\cite{Ch84}), and graphs of large girth~(\cite{FLPR25}).
Further, there are FPT algorithms~(\cite{AKK22,GS21,KKL20}), enumeration results~(\cite{GKKL22,GJMS24}), and exact algorithms~(\cite{CHLLP21,KL16,LT22,MM23}).
Different variants and generalisations of the problem have been studied. Among them are \pmc~(\cite{LT22}), \dpm~(\cite{BP25}), \dc~(\cite{GS21}), and the recent \matchmultc~(\cite{GJMS24}). Also, the forbidden subgraph decompositions considered in~\cite{RS02} are a generalisation of matching cuts.

In this work we consider those variants that take into account the size of a matching cut.
The oldest such variant is \pmc, introduced in~\cite{HT98}. A \emph{perfect} matching is a matching such that every vertex is incident to an edge in the matching. For \pmc, we ask whether in a graph $G$, there exists a matching cut that is a perfect matching, see in particular~\cite{BCD24,LT22}.
A \emph{maximum} (resp.~\emph{minimum}) matching cut is a matching cut with the maximum (resp.~\emph{minimum}) number of edges.
Only recently, in~\cite{LPR24}, the optimisation problem \maxmc{} was introduced.
\minmc, its natural counterpart, was introduced afterwards in~\cite{LLPR24}.
Observe that if ({\sc Perfect}) \mc{} is \NP-complete for some graph class, then the optimisation problem \maxmc{} is \NP-hard for that same graph class.
Similarly, \NP-completeness results for \mc{} imply \NP-hardness for \minmc.
For polynomial-time results we obtain implications in the other directions.

\begin{table}[]
    \centering
\begin{tabular}{|c | cccc| }

\hline
\rule{0pt}{3.5mm}
&   & \textsc{Minimum} & \textsc{Maximum}  & \textsc{Perfect} \\
& \textsc{Matching Cut} & \textsc{Matching Cut}  &  \textsc{Matching Cut} &  \textsc{Matching Cut}  \\[1mm]
\hline
\rule{0pt}{3.5mm}

\multirow{3}*{\p} & $sP_{3} + \bm{P_{8}} $ \cite{LPR22}   & $\bm{sP_2 + P_8}$ &   $ sP_{2} + P_{6}$ \cite{LPR24}  & $sP_4 + P_6$ \cite{LPR23a}\\
&  $sP_3 + \bm{P_4 + P_6}$ \cite{LPR22} & $\bm{sP_2 + P_4 + P_6}$ &  & \\[1mm]
& $sP_3 + \bm{S_{1,1,3}}$ \cite{LPR22} & $\bm{sP_2 + S_{1,1,3}}$ & & $sP_4 + S_{1,2,2}$ \cite{LT22,LPR23a} \\[2mm]
\hline
\rule{0pt}{3.5mm}
\multirow{6}*{\NP} & $3P_5$ \cite{LPR23a}  & $\bm{3P_3}$    & $2P_3$ \cite{LPR24}  & $3P_6$ \cite{LL26} \\
& $2P_7$ \cite{LL26}  & ($P_{11}$)     & ($P_{7}$)  & $2P_7$ \cite{LL26} \\
& $P_{14}$ \cite{LL26}   & & & $P_{14}$ \cite{LL26}\\[1mm]
& $K_{1,4}$ \cite{Ch84} & $(K_{1,4})$ & $K_{1,3}$ \cite{LPR24} & $K_{1,4}$ \cite{LT22}\\
& $H_i^*, i \geq 1$ \cite{FLPR25} & $(H_i^*, i \geq 1)$ & ($H_i^*, i \geq 1$) & $H_i^*, i \geq 1$ \cite{LT22}  \\
& $C_r, r \geq 3$ \cite{FLPR25} & $(C_r, r \geq 3)$ & $(C_r, r \geq 3)$ & $C_r, r \geq 3$ \cite{LT22} \\[1mm]
\hline

\end{tabular}
    \caption{The complexity of different \mc{} variants on $H$-free graphs. Results in bold are shown in this paper. Results in parentheses are direct implications of other results in the table.}
    \label{tab:overview}
\end{table}

In Table~\ref{tab:overview}, we give an overview on results for \mc, {\sc Minimum, Maximum} and \pmc{} on $H$-free graphs, which received particular attention in recent years. All graphs mentioned in the table are defined in Section~\ref{sec:prelim}.
In~\cite{FLPR25,LT22} it was proven that ({\sc Perfect}) \mc{} remains \NP-complete on graphs of fixed arbitrary girth and thus for $C_r$-free graphs, for $r \geq 3$. Further, it is known since 1984 that \mc{} is \NP-complete for $K_{1,4}$-free graphs~(\cite{Ch84}). A similar result exists for {\sc Perfect} and \maxmc~(\cite{LT22,LPR24}). This implies that all open cases are for $H$ being a forest where each component is a path or a subdivided claw.

Hence, there is a special focus on $P_r$-free graphs, where $P_r$ is the path on $r$ vertices. The first explicit result, a polynomial-time algorithm for \mc{} on $P_4$-free graphs, dates back to 2009 (\cite{Bo09}). In~\cite{Fe23}, Feghali showed polynomial-time solvability for $P_5$-free graphs, which got extended to $P_6$-free graphs in~\cite{LPR22}. A polynomial-time algorithm for \pmc{} on $P_6$-free graphs was given in~\cite{LPR23a}. Complementary \NP-completeness results have been published in~\cite{Fe23,LPR22} for \mc. 
The currently best result shows \NP-completeness for $P_{14}$-free graphs~(\cite{LL26}) for both \mc{} and \pmc. This leaves the case of $P_r$-free graphs, for $7\leq r \leq 13$, open.

Interestingly, \maxmc{} is the only matching cut variant for which a complexity dichotomy for $H$-free graphs is known so far (see~\cite{LPR24} and Table~\ref{tab:overview}). This is mainly due to the fact that it is \NP-hard for $K_{1,3}$-free graphs and $P_7$-free graphs. The other variants are either polynomial-time solvable or the complexity is open for these graphs. 
Prior to this work, \maxmc{} was the only variant for which the complexity on $P_7$-free graphs was known. 
For \minmc, a polynomial-time algorithm for $K_{1,3}$-free graphs was given in~\cite{LLPR24}, but a systematic complexity analysis was missing so far.

We initiate this analysis and in particular contribute to closing the gap for $P_r$-free graphs by giving a polynomial-time algorithm for $P_8$-free graphs for \minmc{} (Theorem~\ref{t-p8}). Note that this implies the same result for \mc.
This raises the question whether there is a graph $H$ such that \minmc{} and \mc{} differ in complexity on $H$-free graphs. We answer this question affirmatively by showing the \NP-hardness of \minmc{} for $3P_3$-free graphs (Theorem~\ref{t-3p3}), on which \mc{} is polynomial-time solvable~(\cite{LPR22}).

Another interesting case is that of $K_{1,3}$-free graphs, for which Bonsma (\cite{Bo09}) showed equivalence between \mc{} and {\sc Edge Cut}. In~\cite{LLPR24,LPR24} this was used to give a polynomial-time algorithm for \minmc{} using \mincut, and to show \NP-hardness for \maxmc{} using \maxcut. 
This revealed the first graph~$H$ for which \maxmc{} and \minmc{} differ in complexity for $H$-free graphs.
We extend this approach further, and give a polynomial-time algorithm for \minmc{} on $S_{1,1,3}$-free graphs using \mincut.
We further contribute to the general aim of completing the complexity dichotomy for ({\sc Minimum}) \mc{} for $H$-free graphs with our polynomial-time algorithm for $(P_6 + P_4)$-free graphs.

In addition to $H$-free graphs, the complexity on (bipartite) graphs of bounded radius and diameter was investigated for most known variants. We observe that our hardness construction has radius~$2$ and diameter~$3$. Further, we present a modification to obtain a bipartite graph of radius~$3$ and diameter~$4$. Together with the observation that some results for \maxmc{} can be easily modified to hold for \minmc, we obtain a complexity dichotomy for (bipartite) graphs of bounded radius and diameter.

\section{Preliminaries}
\label{sec:prelim}
We only consider finite, simple, undirected graphs. Let $G = (V,E)$ be a graph and $v \in V$ a vertex. We denote by $n$ the number of vertices of $G$. The set $N(v) = \{u \in V| uv \in E\}$ is the \emph{neighbourhood} of $v$ and $|N(v)|$ the \emph{degree} of $v$. We say that a vertex $u\in V$ is a \emph{private neighbour} of $v$ (with respect to a set $S$) if it is only adjacent to $v$ (in $S$). Let $C\subseteq V$. We say $v$ is adjacent to $C$ if there is a vertex $u \in C$ such that $v$ is adjacent to $u$. We denote by $\dist_C(u,v)$ the length of a shortest path between $u$ and $v$ using only vertices in $C$. 

We denote by $G[C]$ the graph induced by $C$. Let $H$ be a graph. If $H$ is an induced subgraph of $G$, we write $H \subseteq_i G$ or $ G \supseteq_i H$. We say that a graph is \emph{$H$-free} if it does not contain $H$ as an induced subgraph. We denote by $sG$, $s \geq 1$, the graph arising from the disjoint union of $s$ copies of $G$.
Let $r, \ell \geq 1$ be integers. Let $P_r$, $C_r$, $K_r$ be the path, cycle, complete graph, each on $r$ vertices.
A graph is \emph{bipartite} if there is a partition $V = A \cup B$ such that all edges of $G$ have one endvertex in $A$ and one in $B$. We denote by $K_{r,\ell}$ the \emph{complete bipartite graph} with $|A| = r$ and $|B| = \ell$. We call the graph $K_{1,3} = S_{1,1,1}$ a \emph{claw}.
$S_{i,j,k}$, for $i,j,k \geq 1$, can be obtained by subdividing the edges of a claw $i-1,j-1$ and $k-1$ times, respectively.
Let $H^*_1$ be the ``H''-graph with vertices $u, v, x_1, x_2, y_1, y_2$ and edges $uv$, $ux_1$, $ux_2$, $vy_1$, $vy_2$. From this graph we obtain $H_i^*$ by subdividing the edge $uv$ $i-1$ times.

The \emph{distance} $\dist(u,v)$ of two vertices $u$ and $v$ is the length of a shortest path between them. The \emph{eccentricity} of a vertex $v$ is the maximum distance between~$v$ and any other vertex of $G$. The \emph{radius} of $G$ is the minimum eccentricity of any vertex in $G$ and the \emph{diameter} the maximum eccentricity of any vertex in $G$.
We need the following result from the literature.
\begin{theorem}[\cite{CS16}]\label{T-Pk}
    Let $G$ be a connected $P_k$-free graph, for $k\geq 4$. Then $G$ either has a dominating $P_{k-2}$ or a dominating $P_{k-2}$-free connected subgraph and such a subgraph can be found in polynomial time.
\end{theorem}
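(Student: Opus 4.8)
The plan is to prove the equivalent statement that $G$ has a connected dominating set $D$ with $G[D]$ either $P_{k-2}$-free or isomorphic to $P_{k-2}$: in the first case $G[D]$ is the desired dominating $P_{k-2}$-free connected subgraph, and in the second case $D$ is a dominating $P_{k-2}$. For the algorithmic part I would take $D$ to be an \emph{inclusion-minimal} connected dominating set: start with $D=V(G)$ and, while some $v\in D$ can be deleted with $G[D\setminus\{v\}]$ still connected and dominating, delete it. This runs in polynomial time, and at termination every $v\in D$ is either a cut vertex of $G[D]$ or has a \emph{private neighbour} in $V(G)\setminus D$ (a vertex adjacent to $v$ but to no other vertex of $D$).

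It then remains to show that such a $D$ cannot have $G[D]$ both containing an induced $P_{k-2}$ and being non-isomorphic to $P_{k-2}$. Assume it does and let $Q=q_1\cdots q_m$ be a \emph{longest} induced path of $G[D]$; then $k-2\le m\le k-1$, the lower bound because $G[D]$ is not $P_{k-2}$-free and the upper bound because $G$ is $P_k$-free. The first step is to observe that $q_1$ is not a cut vertex of $G[D]$: otherwise some component of $G[D\setminus\{q_1\}]$ other than the one containing $q_2,\dots,q_m$ would contain a neighbour $u$ of $q_1$, and then $uq_1q_2\cdots q_m$ would be an induced path on $m+1$ vertices, contradicting the maximality of $Q$ if $m=k-2$ or the $P_k$-freeness of $G$ if $m=k-1$. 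Hence, by minimality of $D$, both endpoints $q_1$ and $q_m$ have private neighbours $x,y\notin D$. If $m=k-1$ this already yields the induced $P_k$ given by $x q_1 q_2\cdots q_{k-1}$, a contradiction. So $m=k-2$; moreover $x\ne y$, and $x$ is adjacent to $y$, since otherwise $x q_1\cdots q_{k-2} y$ would be an induced $P_k$. Thus $Q$ together with $x$ and $y$ forms an induced $C_k$, and, as $G[D]\ne Q$, $G[D]$ has a vertex $u\in D\setminus V(Q)$ adjacent to $Q$; by the maximality of $Q$ this $u$ is adjacent to at least two vertices of $Q$ and in particular not only to an endpoint, and by minimality of $D$ this $u$ is itself a cut vertex of $G[D]$ or has a private neighbour outside $D$.

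Deriving a contradiction from this last configuration is the step I expect to be the main obstacle. I would let $q_i,q_j$ with $i<j$ be the extreme neighbours of $u$ on $Q$ and split into cases according to the positions of $i$ and $j$ and according to whether $u$ is adjacent to $x$, to $y$, to a private neighbour of its own, or to some vertex of $D$ beyond $q_i,q_j$; in each case one stitches together a sub-path of $Q$, one or both of $x,y$, the vertex $u$, and possibly a short detour into $D\setminus V(Q)$ (justified by $u$ being a cut vertex or having a private neighbour) to form an induced path on exactly $k$ vertices, contradicting the $P_k$-freeness of $G$. Carrying out this bookkeeping carefully — checking that no unwanted chords appear, and invoking the induced-$C_k$ structure together with the ``private neighbour or cut vertex'' dichotomy at each obstructing vertex — is the technical core of the argument. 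Once the claim is established the theorem follows, since the inclusion-minimal connected dominating set $D$ was produced in polynomial time.
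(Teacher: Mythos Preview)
This theorem is quoted from Camby and Schaudt~\cite{CS16} and is not proved in the present paper, so there is no proof here to compare against. Your plan---take an inclusion-minimal connected dominating set $D$ and argue that $G[D]$ is either $P_{k-2}$-free or isomorphic to $P_{k-2}$---is the right kind of approach and is close in spirit to what~\cite{CS16} does.

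One step in your outline does not hold as written. After obtaining the induced cycle on $x,q_1,\dots,q_{k-2},y$ and choosing $u\in D\setminus V(Q)$ adjacent to $Q$, you assert that the maximality of $Q$ forces $u$ to have at least two neighbours on $Q$. This follows only when the unique neighbour of $u$ on $Q$ is an endpoint: if $u$ is adjacent to a single interior vertex $q_i$ with $1<i<k-2$, then both candidate extensions $q_1\cdots q_i\,u$ and $u\,q_i\cdots q_{k-2}$ have at most $k-2$ vertices, and maximality yields nothing. Hence the configuration you plan to analyse is broader than you describe, and your case split must also cover single interior attachments. The cut-vertex/private-neighbour dichotomy you invoke for $u$ is still the correct lever here---one extends through $u$ into another part of $D$ or out to a private neighbour and then routes back to $x$ or $y$---but the bookkeeping for general $k$ is more involved than your sketch suggests; \cite{CS16} carries the argument out in full.
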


\noindent
A \emph{red-blue colouring} of $G$ is a colouring of the vertices with red and blue, using each colour at least once. We say that a red-blue colouring is \emph{valid} if every red vertex is adjacent to at most~$1$ blue vertex and every blue vertex is adjacent to at most~$1$ red vertex.
An edge is \emph{bichromatic} if it has one red and one blue endvertex, and an edge or a vertex set $S \subseteq V$ is \emph{monochromatic} if all vertices in~$S$ have the same colour. Let $R$ be the set of red vertices of $G$. A \emph{red component} of $G$ is a connected component of the graph $G[R]$. Similarly, for $B$, the set of blue vertices, a \emph{blue component} of $G$ is a connected component of $G[B]$.
The \emph{value}~$\nu$ of a valid red-blue colouring is the number of bichromatic edges.
We make several well-known observations, see e.g.~\cite{LPR22,LPR24}.

\begin{observation}\label{basic-observations}
    Let $G$ be a graph, $\ell \geq 3$, and $r\geq 2$. Then,
    \begin{itemize}
        \item $G$ has a matching cut of size~$\nu$ if and only if it has a red-blue colouring of value~$\nu$.
        \item $K_\ell$ and $K_{\ell,r}$ are monochromatic in any valid red-blue colouring.
        \item if $G$ has a vertex $v$ of degree~$1$, the partition of the vertices into the sets $\{v\}$ and $V\setminus \{v\}$ leads to a matching cut of size~$1$.
    \end{itemize}
\end{observation}

\noindent
Let $R,B \subseteq V$ with $ R \cap B = \varnothing$. A \emph{red-blue $(R,B)$-colouring} is a partial colouring of $G$ where the vertices in $R$ are coloured red and those in $B$ are coloured blue.
We say that a red-blue $(R,B)$-colouring is \emph{valid} if it can be extended to a valid red-blue colouring.
Given a pair $(R,B)$, we define propagation rules that try to extend the red-blue $(R,B)$-colouring or show that no valid red-blue $(R,B)$-colouring exists. Some of these rules have already been given in~\cite{LL19} and have been widely used since then. In particular R4, which does not hold when maximising the cut, is crucial for our results.
Let $R' \subseteq R$ be the set of red vertices which have a blue neighbour in $B$ and similarly, let $B' \subseteq B$ be the set of blue vertices with a red neighbour in $R$. Let $Z = V\setminus (R\cup B)$.

\begin{description}
    \item[R1] Return \textsf{no}, i.e., $G$ has no valid red-blue $(R,B)$-colouring, if a vertex $v \in Z$~is:
    \begin{enumerate}
        \item adjacent to a vertex in $R'$ and a vertex in $B'$.
        \item adjacent to two vertices in $R$ and a vertex in $B'$.
        \item adjacent to two vertices in $B$ and a vertex in $R'$.
        \item adjacent to two vertices in $R$ and two vertices in $B$.
    \end{enumerate}

    \item[R2] Let $v\in Z$:
    \begin{enumerate}
        \item if $v$ is adjacent to two vertices in $R$ or one in $R'$, then colour $v$ red.
        \item if $v$ is adjacent to two vertices in $B$ or one in $B'$, then colour $v$ blue.
    \end{enumerate}
    \item[R3] Let $H$ be a subgraph of $G$, isomorphic to $K_{3}$, let $v \in Z \cap V(H)$.\\
        If $H$ contains a blue (resp.~red) vertex, then colour $v$ blue (resp.~red).

    \item[R4] Let $C$ be a connected component of $G[Z]$:
        \begin{enumerate}
            \item if no vertex of $C$ is adjacent to a vertex in $R$, then colour $C$ blue.
            \item if no vertex of $C$ is adjacent to a vertex in $B$, then colour $C$ red.
        \end{enumerate}

    \item[R5] Let $H$ be a subgraph of $G$, isomorphic to $K_{2,3}$, let $v \in Z \cap V(H)$.
        \begin{enumerate}
            \item If $H$ contains a blue and a red vertex, return \textsf{no}.
            \item If $H$ contains a blue (resp.~red) vertex, then colour $v$ blue (resp.~red).
        \end{enumerate}

\end{description}

\noindent
We say that a propagation rule is \emph{safe} if $G$ has a valid red-blue $(R,B)$-colouring before the application of the rule if and only if $G$ has a valid red-blue $(R,B)$-colouring after the application of the rule. 

\begin{restatable}{lemma}{lempropsafe}\label{l-propsafe}\label{l-propconnectivity}
    Propagation rules R1--R5 are safe, can be applied in polynomial time, their application creates no additional red and blue components and does not increase the minimum possible value of a valid red-blue $(R,B)$-colouring.
\end{restatable}

 \begin{proof}
    The safeness for R1 and R2 has already been proven in~\cite{LL19}.
    Since by Observation~\ref{basic-observations} triangles and $K_{2,3}$ are monochromatic, it follows that R3 and R5 are safe.
    To see that R4 is safe, recall that we start with at least one vertex of each colour. This guarantees the existence of vertices of both colours and, therefore, we may colour connected components adjacent to vertices of only one colour in that colour. 
    For every application of a propagation rule, we only colour vertices with the colour of one of their neighbours. Therefore, we do not create any additional coloured components.
    Note further that R1--R3 and R5 preserve the value of the colouring. R4 only decreases the maximum possible value but never results in an increase of the minimum possible value.
\end{proof}

\noindent
The following lemmas have been shown previously for \maxmc{} in~\cite{LPR24}.

\begin{restatable}{lemma}{lemsmalldomset}\label{l-smalldomset}
    Let $G$ be a connected graph with domination number~$g$, where $g\geq 1$ is a constant. We can find a minimum red-blue colouring of $G$ or conclude that no such colouring exists in time $O(2^g n^{g+2})$.
\end{restatable}

\begin{proof}
Let $D$ be a dominating set of $G$ with $|D| = g$. We branch over all $2^{|D|} = 2^g$ options of
colouring the vertices of $D$ red or blue. 
For every coloured vertex without a neighbour of the other colour, we consider all $O(n)$ options to colour its neighbourhood. For every vertex~$v$ with a neighbour of the other colour, we colour all its other neighbours with the colour of~$v$.
Since $D$ is a dominating set, we guessed a red-blue colouring of the whole graph $G$. We can check in time $O(n^2)$ whether this colouring is a valid red-blue colouring and compute its value. If the colouring is not valid, we discard the branch, otherwise we remember its value and output the valid colouring with the smallest value. Since we consider $O(2^gn^g)$ colourings, we get a runtime of $O(2^g n^{g+2})$. 
\end{proof}

\begin{restatable}{lemma}{lemindepset}\label{L-Indep Set}
    Let $G = (V,E)$ be a connected graph with a red-blue $(R,B)$-colouring, for $R,B \subseteq V$. If $V\setminus (B\cup R)$ induces an independent set then it is possible in polynomial time to either find a minimum valid red-blue $(R,B)$-colouring or to conclude that $G$ has no valid red-blue $(R,B)$-colouring.
\end{restatable}

\begin{proof}
    Let $Z = V\setminus (B\cup R)$. According to R4, we colour every vertex of $Z$ that is only adjacent to vertices of one colour with this colour. Let $U$ be the set of the remaining uncoloured vertices of $Z$, i.e., the vertices with neighbours in both $R$ and $B$. Let $W = N(U)$. 
    Let $G'$ be the graph with vertex set $U\cup W$ and edge set $E(G') = \{uv | u \in U, v \in W \}$. We claim that the set of bichromatic edges of every valid red-blue $(R,B)$-colouring of $G$ is the union of a perfect matching in~$G'$ and the set of edges with one endvertex in $R$ and one in $B$.

\begin{figure}
    \centering
    \begin{tikzpicture}
    \tikzset{
        rahmen1/.style={
            rounded corners = 5pt,
            draw,
            dashed,
            minimum width=20pt,
            minimum height=45pt
        }
    }
    \tikzset{
        rahmen2/.style={
            rounded corners = 5pt,
            draw,
            dashed,
            minimum width = 15 pt,
            minimum height = 55 pt
        }
    }
    \tikzset{
        rahmen3/.style={
            rounded corners = 5pt,
            draw,
            dashed,
            minimum width = 20pt,
            minimum height = 116pt
        }
    }
    \tikzset{
        rahmen4/.style={
            rounded corners = 5pt,
            draw,
            dashed,
            minimum width = 20pt,
            minimum height = 60pt
        }
    }
    \begin{scope}[yscale = 0.5, xscale = 0.7]
        \node[rvertex](r3) at (-2,3){};
        \node[rvertex](r4) at (-2,2){};
        \node[rvertex](r5) at (-2,1){};
        
        \node[bvertex](b1) at (2,3){};
        \node[bvertex](b2) at (2,2){};
        \node[bvertex](b4) at (2,0){};
        
        \node[evertex](v4) at (0,3) {};
        \node[evertex](v5) at (0,2) {};
        \node[evertex](v6) at (0,1) {};
        \node[evertex](v7) at (0,0) {};
        
        \draw[edge] (r3) -- (v4);
        \draw[tedge] (r3) -- (v5);
        \draw[edge] (r4) -- (v6);
        \draw[tedge] (r5) -- (v7);
        
        \draw[tedge] (b1) -- (v4);
        \draw[edge] (b2) -- (v5);
        \draw[tedge] (b2) -- (v6);
        \draw[edge] (b4) -- (v7);

        \node[rahmen1, nicered] (rm1) at (-2,2){};
        \node[rahmen4] (rm2) at (0,1.5){};
        \node[rahmen4, lightblue] (rm4) at (2,1.5){};
        \node[](x) at (-2,4){$X$};
        \node[](y) at (2,4){$Y$};
        \node[](u) at (0,4){$U$};
        \end{scope}

    \begin{scope}[yscale = 0.5, xscale = 0.7, shift = {(7,0)}]
        \node[rvertex](r3) at (-2,3){};
        \node[rvertex](r4) at (-2,2){};
        \node[rvertex](r5) at (-2,1){};
        
        \node[bvertex](b1) at (2,3){};
        \node[bvertex](b2) at (2,2){};
        \node[bvertex](b4) at (2,0){};
        
        \node[rvertex](v4) at (0,3) {};
        \node[bvertex](v5) at (0,2) {};
        \node[rvertex](v6) at (0,1) {};
        \node[bvertex](v7) at (0,0) {};
        
        \draw[edge] (r3) -- (v4);
        \draw[edge] (r3) -- (v5);
        \draw[edge] (r4) -- (v6);
        \draw[edge] (r5) -- (v7);
        
        \draw[edge] (b1) -- (v4);
        \draw[edge] (b2) -- (v5);
        \draw[edge] (b2) -- (v6);
        \draw[edge] (b4) -- (v7);

        \node[rahmen1, nicered] (rm1) at (-2,2){};
        \node[rahmen4] (rm2) at (0,1.5){};
        \node[rahmen4, lightblue] (rm4) at (2,1.5){};
        \node[](x) at (-2,4){$X$};
        \node[](y) at (2,4){$Y$};
        \node[](u) at (0,4){$U$};
        \end{scope}
    \end{tikzpicture}
    \caption{A perfect matching in $G'$ (left) and the corresponding minimum valid red-blue colouring.}\label{fig-lemma Indep Set}
\end{figure}
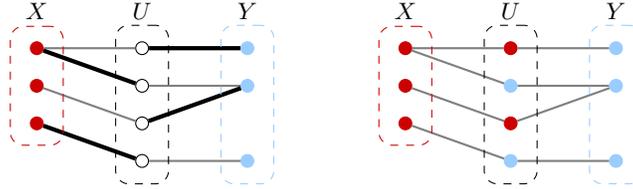

    First suppose that $G'$ has a perfect matching $M$. Let $z\in U$. Suppose that $z$ is incident to an edge $zw\in M$. If $w\in R$ we colour $z$ in  blue. Otherwise, $w\in B$ and we colour $z$ red. Since $M$ is perfect, we coloured every vertex in $U$ and every vertex in $W$ got at most one neighbour of the other colour. We obtain a valid red-blue colouring of $G$.

    Now suppose that $G$ has a valid red-blue $(R,B)$-colouring. Every edge with an endvertex in $R$ and the other one in $B$ is bichromatic and there are no other bichromatic edges in $G[R\cup B]$. Let $M$ be the set of bichromatic edges in the remaining graph. Note that every edge of $M$ has an endvertex in $U$ so $M$ is in~$G'$. By definition of a red-blue colouring, $M$ has to be a matching. Moreover, if $z\in U$ then $z$ has a blue neighbour and a red neighbour, so it is contained in a bichromatic edge. We conclude that $M$ is perfect.
    
    Since $G'$ is bipartite we can determine a maximum matching of $G'$ in polynomial time using 
    Karp's algorithm~\cite{HK73}. Note that every perfect matching in~$G'$ has the same size. It follows that we can find a minimum valid red-blue $(R,B)$-colouring or conclude that there is no valid red-blue $(R,B)$-colouring in polynomial time.
\end{proof}

\noindent
We show more lemmas specifically for red-blue colourings in $P_8$-free graphs.
To show the following lemma, we will make use of a result from the literature.
\begin{lemma}[\cite{LPR22}]\label{l-LPR22-monodom}
    Let $G$ be a connected graph with a red-blue $(R,B)$-colouring, for connected sets $R, B \subseteq V$. If $R$ or $B$ dominates the uncoloured vertices, it is possible to check in polynomial time if $G$ has a valid red-blue $(R,B)$-colouring.
\end{lemma}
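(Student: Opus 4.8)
The plan is to reduce, after preprocessing, to an instance of \twosat{}. By symmetry it suffices to treat the case where $R$ dominates the uncoloured vertices. First I would check that $G[R\cup B]$ does not already violate validity and then exhaustively apply the propagation rules R1--R5, which is safe and runs in polynomial time by Lemma~\ref{l-propsafe}; if some rule returns \textsf{no} we are done, so assume we reach a fixed point and let $Z$ denote the still-uncoloured vertices. Since propagation only ever colours vertices, the (now possibly larger) red set still dominates $Z$. I would then record, directly from R2, the following structural facts about the fixed point: every $z\in Z$ has \emph{exactly} one red neighbour $\rho(z)$ — at least one by domination, at most one or else R2 colours $z$ red — and this neighbour lies in $R\setminus R'$; moreover $z$ has \emph{at most} one blue neighbour $\beta(z)$, which lies in $B\setminus B'$ (two blue neighbours, or one in $B'$, would make R2 colour $z$ blue); and no vertex of $Z$ is adjacent to $R'\cup B'$.

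The crucial observation is that the connected components of $G[Z]$ are monochromatic in every valid red-blue $(R,B)$-colouring: if $z\in Z$ is blue then, since it already has the red neighbour $\rho(z)$, every remaining neighbour of $z$ — in particular every neighbour inside $Z$ — must be blue, so blueness propagates along the edges of $G[Z]$, and by contraposition so does redness. Writing $Z_1,\dots,Z_m$ for the components of $G[Z]$, the task reduces to choosing a single colour for each $Z_i$. I would translate validity into constraints on this choice: a blue component $Z_i$ makes all edges $z\rho(z)$ with $z\in Z_i$ bichromatic, so no red vertex may be used twice, which forces $\rho$ to be injective on $Z_i$ and forbids $Z_i$ and $Z_j$ from both being blue whenever $\rho(z)=\rho(z')$ for some $z\in Z_i$, $z'\in Z_j$ with $i\neq j$; symmetrically a red component $Z_j$ makes the edges $z\beta(z)$ bichromatic, forcing $\beta$ to be injective on $Z_j$ and forbidding $Z_i$, $Z_j$ from both being red on a $\beta$-collision. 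Using $\rho(z)\notin R'$, $\beta(z)\notin B'$, and that no uncoloured vertex touches $R'\cup B'$, one checks that these are the \emph{only} obstructions, i.e.\ any colouring of the components satisfying them extends to a valid red-blue colouring (both colours occur since $R,B\neq\varnothing$).

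Finally, introduce a Boolean variable $x_i$ for ``$Z_i$ is red''. The conditions above become 2-CNF clauses: $x_i$ for every component on which $\rho$ is not injective, $\neg x_i$ for every component on which $\beta$ is not injective, $x_i\vee x_j$ for every $\rho$-collision between $Z_i$ and $Z_j$, and $\neg x_i\vee\neg x_j$ for every $\beta$-collision. This \twosat{} instance has polynomial size and is solvable in polynomial time, $G$ has a valid red-blue $(R,B)$-colouring if and only if it is satisfiable, and a satisfying assignment yields such a colouring directly; the whole procedure is polynomial.

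I expect the main obstacle to be the correctness of the monochromatic-component observation together with the claim that the internal injectivity conditions and the pairwise $\rho$- and $\beta$-collision conditions capture \emph{every} way the extension can fail — in particular, ruling out interactions with the already-coloured vertices of $R'\cup B'$ and confirming that no ``global'' (non-pairwise) matching obstruction can survive the preprocessing. Once that verification is nailed down, the reduction to \twosat{} is routine.
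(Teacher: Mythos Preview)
The paper does not give a proof of Lemma~\ref{l-LPR22-monodom}; it is quoted from~\cite{LPR22} as a black box. So there is nothing to compare against in this paper, and the question is only whether your argument is correct.

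It is. After exhaustive propagation every $z\in Z$ has exactly one neighbour in $R$ (domination gives $\geq 1$, R2 gives $\leq 1$) and at most one in $B$, and none in $R'\cup B'$; from the unique red neighbour the monochromaticity of the components of $G[Z]$ follows exactly as you say. The point you flag as the main obstacle --- that pairwise collision clauses suffice --- is fine: for a fixed $r\in R$, the constraint ``at most one of the components meeting $\rho^{-1}(r)$ is blue'' is equivalent to the conjunction of all pairwise clauses $x_i\vee x_j$ over components $Z_i,Z_j$ meeting $\rho^{-1}(r)$ (with $i=j$ allowed, giving the unit clause when $\rho$ fails to be injective on $Z_i$), and symmetrically for $\beta$. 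Since $\rho(z)\notin R'$ and $\beta(z)\notin B'$ after propagation, no already-matched coloured vertex can be overloaded, and you have verified that vertices inside $Z$ are automatically fine in either colour; so these clauses indeed capture all obstructions. Your argument does not use the connectedness of $R$ and $B$, which is harmless --- that hypothesis is present only because it holds in every application in the paper. The reduction in~\cite{LPR22} is likewise a \twosat{} reduction, so your approach matches the intended one.
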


\begin{restatable}{lemma}{lemmonodom} \label{L-monodom}
    Let $G = (V,E)$ be a connected $P_8$-free graph with a red-blue $(R,B)$-colouring, $R,B \subseteq V$, $R, B \neq \varnothing$, and $G[R]$ and $G[B]$ connected. Let $Z$ be the set of uncoloured vertices. If $Z$ is dominated by $R$ or $B$, then we can find in polynomial time a minimum red-blue $(R,B)$-colouring of~$G$ or decide that no such colouring exists. 
\end{restatable}

\begin{proof}
Let $R$ (resp.~$B$) be the connected set of red (resp.~blue) vertices of $G$.
Let $Z$ be the set of uncoloured vertices. 
We may assume without loss of generality that $R$ dominates~$Z$.
Hence, every connected component of $G[Z]$ is monochromatic in every valid red-blue $(R,B)$-colouring of $G$. To see this, note that a blue vertex with a red neighbour in $G[Z]$ would have a second red neighbour in $R$, a contradiction.

Suppose every vertex in $Z$ is adjacent to a blue and a red vertex. This implies that in every valid red-blue $(R,B)$-colouring of $G$, every vertex in $Z$ contributes~$1$ to the value of the colouring. Therefore, the value of all valid red-blue $(R,B)$-colourings is the same, and we apply Lemma~\ref{l-LPR22-monodom}. We either find a valid red-blue colouring or conclude that no such colouring exists. In the first case we remember its value, in the latter, we discard the branch.

Hence, in the following we may assume that there is a vertex $x \in Z$ which is not adjacent to a vertex in $B$. Let $C$ be the connected component of $x$ in~$G[Z]$. 
Since $C$ was not coloured by R4, there is at least one vertex $y$ in $C$ adjacent to a blue vertex $b$ in $B$. 
If there are several such vertices, we take $y$ to be the closest to $x$ in $C$. Let $x = p_0 \, \dots \, p_k = y$ be the shortest path between $x$ and $y$ in $C$. Since $G$ is $P_8$-free, we have that $k \leq 6$. Let $q_0,\dots,q_k$ be the red neighbours of $p_0,\dots,p_k$, in the dominating set $R$, see Figure~\ref{f-p7-dom-poly}. We branch over the $\mathcal{O}(n^7)$ colourings of $N(q_0)\cup\dots\cup N(q_k)$ and propagate the colouring. We leave $C$ uncoloured even though we already know its colour.

    \clapp{\ref{L-monodom}}{1}
        All connected components of $G[Z]$ containing a vertex without a blue neighbour are adjacent to $b$.

\begin{claimproof} 
    Suppose for a contradiction that there is a connected component $C'$ of $G[Z]$ containing a vertex without a blue neighbour and such that $C'$ is not adjacent to $b$. Note that R4 implies that there is a blue neighbour $b' \in B$ of $C'$ and thus, $C'$ consists of at least $2$ vertices.
    If $C'$ has several blue neighbours, we let $b'$ be the one with the smallest distance in $B$ to~$b$. Let $b = b_0 \, \dots \, b_\ell=b'$, for $\ell\leq 6$, be the shortest blue path between $b$ and $b'$. 
Further, let $u \in C'$ be a neighbour of $b'$, let $v \in C'$ be a neighbour of $u$ and let $q_u$ and $q_v \in R$ be the red neighbours of $u$ and $v$, see Figure~\ref{f-p7-dom-poly}.     

\begin{figure}
    \centering
    \begin{tikzpicture}
\begin{scope}
    \node[evertex, label={below:$p_k = y$}](s2p) at (2,0){};
    \node[evertex, label=below:$p_1$](s1p) at (1,0){};
    \node[evertex, label={below:$x = p_0$}](s0p) at (0,0){};

    \node[rvertex, label = right:$q_k$](q2p) at (2,1){};
    \node[rvertex, label = right:$q_1$](q1p) at (1,1){};
    \node[rvertex, label = right:$q_0$](q0p) at (0,1){};

    \draw[edge](s0p) -- (s1p);
    \draw[edge](s0p) -- (q0p);
    \draw[edge, dotted](s1p) -- (s2p);
    \draw[edge](s1p) -- (q1p);
    \draw[edge](s2p) -- (q2p);

    \draw[] (-0.6, -0.6) rectangle (2.6,0.35);
    \node[] (c) at (-0.85,0){$C$};
    
\end{scope}

\begin{scope}[shift = {(3.5,0)}]
    \node[bvertex, label = {below:$b = b_0$}](t0) at (0,0){};
    \node[bvertex, label = below:$b_1$](t1) at (1,0){};
    \node[bvertex, label = {below:$b_\ell = b' $}](tk) at (2,0){};

    \draw[edge](t0) -- (t1);
    \draw[edge, dotted](t1) -- (tk);

    \draw[lightblue] (-0.6,-0.6) rectangle (2.6,0.35);

\end{scope}

\begin{scope}[shift = {(8,0)},  xscale = -1]
    \node[evertex, label=below:$v$](s2) at (0,0){};
    \node[evertex, label=below:$u$](s1) at (1,0){};

    \node[rvertex, label = left:$q_v$](q2) at (0,1){};
    \node[rvertex, label = left:$q_u$](q1) at (1,1){};

    \draw[edge](s1) -- (s2);
    \draw[edge](s1) -- (q1);
    \draw[edge](s2) -- (q2);

    \draw[] (-0.5, -0.6) rectangle (1.5,0.35);
    \node[] (c) at (-0.75,0){$C'$};
    
\end{scope}

    \draw[edge](s2p) -- (t0);
    \draw[edge](s1) -- (tk);

\end{tikzpicture}
    \caption{The connected components $C$ and $C'$ of $G[Z]$.}
    \label{f-p7-dom-poly}
\end{figure}
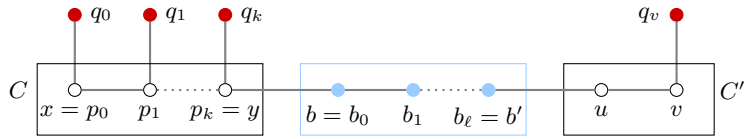

    We claim that $p_0 \, \dots \, p_k \, b_0 \, \dots \, b_\ell\, u\, v \, q_v$ is an induced path of length at least~$7$. Note first that, by assumption, $x = p_0$ is not adjacent to any of $b_0,\dots,b_\ell$. 
   Further, the choice of $y = p_k$ implies that none of $p_1, \dots, p_{k-1}$ is adjacent to any of $b_0,\dots,b_\ell$.
   Any uncoloured neighbour of $y$ is in $C$ and remains uncoloured. Therefore, and since $y$ did not have two blue neighbours before colouring the neighbourhood of $q_0, \dots, q_k$, $y$ has no neighbour in $b_1, \dots, b_\ell$.
    Note further that no vertex of $C$ can be adjacent to a vertex in $C'$.
   If $q_v$ is a neighbour of one of $p_0, \dots , p_k$ and was uncoloured before colouring the neighbours of $q_0, \dots, q_k$, it would belong to $C$ and would thus remain uncoloured. 
   Also, if $q_v$ would be one of the red neighbours of $p_0, \dots , p_k$ that is, $q_v$ is one of $q_0, \dots, q_k$, $v$ would have been coloured. For the same reason, $q_v$ cannot be adjacent to any of the $b_0, \dots, b_\ell$ or to $u$.
Therefore, $p_0 \, \dots \, p_k \, b_0 \, \dots \, b_\ell\, u\, v \, q_v$ is indeed an induced path of length at least~$7$.
It also follows that $k = \ell = 1$.

We now branch over all $O(n^4)$ colourings of $N(q_u), N(q_v), N(b), N(b')$. We propagate the colouring and leave $C$ and $C'$ uncoloured even though we already know their colour.
Let $Z$ be again the set of uncoloured vertices. If there is no connected component of $G[Z]$ other than $C$ and $C'$ we colour them. If this leads to a valid red-blue colouring, we remember its value, else we discard the branch. In both cases we consider the next branch.
Hence, we assume that there is another connected component $C''$ in $G[Z]$.
Let $w$ be a vertex in such a connected component. Note that $w$ has a neighbour $q_w$ in $R$. We consider the shortest red path from $w$ to $q_v$. If it contains no vertex of $q_0,  q_1$, we get that $w\, q_w \, \dots \, q_v\, v\, u \, b'  \, b \, y \, x$ is an induced path of length at least~$8$, a contradiction to the assumption that $G$ is $P_8$-free. 
Otherwise we obtain that the path $q_w \, \dots \,q_v $ contains at least one extra vertex and thus, $w\, q_w \, \dots \, q_v\, v\, u \, b' \, b $ is an induced path of length at least~$8$, again a contradiction.

It follows that $C''$ does not exist. If $C$ and $C'$ were the only components of $G[Z]$, they got coloured.
Hence, we obtain that $C'$ does not exist.
Thus, all connected components of $G[Z]$ containing a vertex without a blue neighbour are adjacent to $b$ and the claim follows.
\end{claimproof}

    \noindent
    By Claim~\ref{L-monodom}.1, we may assume that if there is any uncoloured vertex without a blue neighbour, then there is a vertex $b \in B$ which is adjacent to all connected components containing such a vertex.
    Recall that the connected components of $G[Z]$ are monochromatic. Further, $b$ is adjacent to at most one red vertex. Hence, we branch over the $O(n)$ colourings of the components adjacent to $b$ and propagate the colouring using R1--R5. If we obtain a no-answer, we discard the branch.
    Afterwards, every uncoloured vertex has a red and a blue neighbour.
    Hence, every vertex in $Z$ contributes~$1$ to the value of every valid red-blue colouring. As before, we apply Lemma~\ref{l-LPR22-monodom}. If we get a valid red-blue $(R,B)$-colouring, we remember its value, otherwise, we discard the branch.

Whenever we discard a branch, we consider the next. 
We remember the value of every valid red-blue colouring which we obtain and return the minimum of their values or that no such colouring exists.
The correctness of our algorithm follows from its description.
We consider in total $O(n^{12})$ branches, each of which can be processed in polynomial time. Therefore, our algorithm runs in polynomial time.
\end{proof}

\begin{restatable}{lemma}{lemddmp}\label{l-ddmp}
    Let $G$ be a connected $P_8$-free graph with a connected dominating set~$D$. Let $D'$ be a dominating set of $G[D]$ of size $g$. We can find in polynomial time a minimum red-blue colouring of $G$ in which $D'$ is monochromatic or conclude that no such colouring exists.
\end{restatable}

 \begin{proof}
    Let $D$ be a connected dominating set of $G$ and $D'$ a dominating set of $G[D]$ of size~$g$.
    Without loss of generality, we colour $D'$ red.
    We branch over all $\mathcal{O}(n^{\strut g} )$ colourings of the neighbourhood of $D'$. If $D'$ is monochromatic we branch over all $O(n)$ options to colour some vertex blue. Note that this implies that $D$ is coloured.
    We propagate the colouring exhaustively using R1--R5. If we obtain a no-answer we discard the branch, otherwise we continue.
    Let $R$ be the set of red vertices and $B$ the set of blue vertices.

    Since $D$ is dominating and coloured, every uncoloured vertex has a neighbour in $D$.
    Note that every blue vertex in $D$ has a red neighbour in $D'$. Thus, the neighbours of the blue vertices in $D$ are coloured blue by propagation. Therefore, every uncoloured vertex has exactly one red neighbour in $D$, resulting in a monochromatic dominating set. 
    We apply Lemma~\ref{L-monodom} to either find a minimum red-blue $(R,B)$-colouring whose value we remember or conclude that no such colouring exists and discard the branch. 

    Whenever we discard a branch, we consider the next. If no branch returns a valid red-blue colouring we return that no such colouring exists, otherwise we return the obtained red-blue colouring with the minimum value.

    The correctness of our algorithm follows from its description. Every branch can be processed in polynomial time, and we consider $O(n^{\strut g})$ branches. Therefore, our algorithm runs in polynomial time.
\end{proof}

\section{Polynomial-Time Results}
\label{sec:poly}
Before showing our main results in this section, we first state several results where the proofs for \maxmc{} can be applied to \minmc.
The polynomial-time algorithms for \maxmc{} for $P_6$-free graphs, graphs of diameter~$2$~(\cite{LPR24}) and bipartite graphs of radius~$2$~(\cite{Lu25}) all start with branching on all colourings of a dominating set of $G$. Then, structural observations are made that show that the uncoloured vertices form an independent set. None of these steps uses that the matching cut is maximised. Hence, we apply the same arguments and conclude with Lemma~\ref{L-Indep Set}.
This approach can also be applied to show Theorem~\ref{T-H+P2}, based on another proof from~\cite{LPR24}.
The polynomial-time algorithm in~\cite{Lu25} for bipartite graphs of diameter~$3$ finds all matching cuts. Hence, we can easily use it to find a minimum matching cut. We thus obtain the following.

\begin{theorem}\label{T-P6}
    \minmc{} is solvable in polynomial time 
    \begin{itemize}
        \item for $P_6$-free graphs,
        \item for graphs of diameter at most $2$,
        \item for bipartite graphs of radius at most~$2$, and
        \item for bipartite graphs of diameter at most~$3$.
    \end{itemize}
\end{theorem}

\begin{theorem}\label{T-H+P2}
    Let $H$ be a graph. If \minmc{} is polynomial-time solvable for $H$-free graphs, then it is so for $(H+P_2)$-free graphs.
\end{theorem}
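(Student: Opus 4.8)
The plan is to reduce the problem on a $(H+P_2)$-free graph $G$, in polynomial time, to polynomially many instances on $H$-free graphs (to be handled by the assumed algorithm) together with the tractable configurations of Lemmas~\ref{l-smalldomset} and~\ref{L-Indep Set}, mirroring the proof of the corresponding statement for \maxmc{} in~\cite{LPR23b}. First I would reduce to connected $G$: if $G$ has at least two components, one component versus the rest is a matching cut of size~$0$, which is minimum; a graph on at most one vertex has no matching cut. If $G$ is itself $H$-free we are done by hypothesis, so assume it is not.

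The structural fact I would exploit is that for \emph{every} edge $uv$ of $G$ the induced subgraph $G - N[\{u,v\}]$ is $H$-free: an induced copy of $H$ there together with the edge $uv$ — whose endpoints are non-adjacent to that copy — would form an induced $H + P_2$. Since $G$ is connected, every valid red-blue colouring has value at least~$1$ and hence contains a bichromatic edge (Observation~\ref{o-cutcolouring}); therefore it suffices to compute, for each ordered pair $(u,v)$ with $uv \in E(G)$, the minimum value of a valid red-blue colouring in which $u$ is red and $v$ is blue, and to output the minimum of these $O(n^2)$ values (reporting that $G$ has no matching cut if none of the choices yields a valid colouring).

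Fixing $(u,v)$: colour $u$ red and $v$ blue; validity forces $N(u)\setminus\{v\}$ red and $N(v)\setminus\{u\}$ blue (if this is inconsistent, discard the pair), and then I would apply R1--R5 exhaustively, which by Lemma~\ref{l-propsafe} is safe and does not increase the minimum attainable value. Afterwards every uncoloured vertex lies in $G - N[\{u,v\}]$, so the uncoloured set $Z$ induces an $H$-free graph, and what remains is to compute a minimum-value extension of the current partial red-blue colouring $(R,B)$ of $G$, or to decide that none exists.

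That last step is where I expect the real difficulty. The coloured vertices constrain $Z$ — a vertex of $Z$ may see both a red and a blue vertex, and a single coloured vertex may see several vertices of $Z$ — so the residual task is not literally \minmc{} on $G[Z]$, and these constraints must be transferred to an $H$-free instance without reintroducing a forbidden induced $H$. Following~\cite{LPR23b}, I would split into cases according to how $R$ and $B$ meet $Z$: if $Z$ induces an independent set, apply Lemma~\ref{L-Indep Set}; if the coloured vertices dominate $Z$ up to a bounded-size remainder, apply Lemma~\ref{l-smalldomset}; otherwise build, from $G[Z]$ and its coloured neighbourhood, an $H$-free graph whose minimum matching cut equals the residual optimum up to an additive constant, and call the assumed algorithm on it. Each of the $O(n^2)$ choices of $(u,v)$ is then processed in polynomial time, and correctness follows because every valid red-blue colouring of the connected graph $G$ is realised by one of the choices.
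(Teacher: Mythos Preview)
Your plan has the right overall shape but branches on the wrong object, and this is precisely why the ``last step'' becomes difficult. You branch on a bichromatic edge $uv$; this makes $G-N[\{u,v\}]$ merely $H$-free, and you are then left with a constrained extension problem that you cannot reduce to an unconstrained instance of \minmc{} on an $H$-free graph. Your proposed ``otherwise build an $H$-free graph whose minimum matching cut equals the residual optimum'' is exactly the missing idea: encoding, for every coloured vertex, the constraint that at most one of its uncoloured neighbours may take the opposite colour, while simultaneously keeping the auxiliary graph $H$-free, is not done anywhere in your sketch, and it is not clear it can be done in general.

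The paper (following~\cite{LPR23b}) sidesteps this entirely by branching on an induced copy of $H$ instead of on an edge. If $G$ is not $H$-free, fix an induced $H$ on a vertex set $S$ of constant size; then $(H+P_2)$-freeness forces $G-N[S]$ to be \emph{edgeless}, not just $H$-free, because any edge there together with $S$ would induce $H+P_2$. Now branch over the $2^{|S|}$ colourings of $S$ and, for each vertex of $S$, the $O(n)$ choices of its (at most one) oppositely coloured neighbour; this colours all of $N[S]$ in $O(n^{|V(H)|})$ branches. The uncoloured vertices form an independent set, so Lemma~\ref{L-Indep Set} finishes each branch directly---there is no residual $H$-free instance to construct, and the assumed polynomial algorithm for $H$-free graphs is only invoked in the base case where $G$ itself is $H$-free. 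Your case split ``independent set / bounded remainder / otherwise'' is therefore unnecessary: with the right branching object the first case is the only one that occurs.
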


\noindent
As can be seen from Table~\ref{tab:overview}, for \maxmc{} these are the maximal polynomial cases for $H$-free graphs. This stands in contrast to our main results for \minmc. In the following we show the polynomial-time solvability for $S_{1,1,3}$-free, $(P_6+P_4)$-free and $P_8$-free graphs. Note that all three results solve open cases for \mc.

\begin{restatable}{theorem}{thmsooe}\label{t-s113}
    \minmc{} is solvable in polynomial time for $S_{1,1,3}$-free graphs.
\end{restatable}

\begin{proof}
    Let $G = (V,E)$ be a $S_{1,1,3}$-free graph.
    We first check whether $G$ has a vertex of degree~$1$. If this is the case, then, by Observation~\ref{basic-observations}, we find a minimum matching cut. 
    Therefore, we may assume that $G$ has minimum degree $2$. 
    We apply Observation~\ref{basic-observations} and search for a minimum red-blue colouring of $G$.

    We branch over all $O(n^2)$ options of choosing two adjacent vertices $x$ and $y$ of $G$. We colour $x$ red and $y$ blue. We propagate the red-blue $(\{x\}, \{y\})$-colouring exhaustively which takes polynomial time by Lemma~\ref{l-propsafe}. If we get a no-answer, we discard the branch, otherwise, we obtained a red set $R$ and a blue set $B$.
    Let $Z$ be the set of uncoloured vertices. Note that since $G$ has minimum degree~$2$, $y$ has at least one neighbour other than~$x$, say $y'$, which is coloured blue by propagation.
    In addition the path $x\,y\,y'$ is induced since otherwise $x$ has two blue neighbours, a contradiction.
    We first show the following claim.

\clapp{\ref{t-s113}}{1} 
        Either $G[Z]$ is claw-free or $N(y) = \{x,y'\}$ and every claw in $G[Z]$ has a vertex adjacent to $y'$.
    
    \begin{claimproof}
    Suppose there is a claw $C$ in $G[Z]$, consisting of vertices $u, v_1, v_2, v_3$ such that $u$ is adjacent to $v_1, v_2, v_3$.

    Consider a shortest path without blue vertices from $C$ to $x$. 
    Note that such a path always exists, otherwise $C$ would be blue by R4.
    Suppose that there is such a path starting from~$u$. Let $u = p_0 \, p_1 \, \dots \, p_k = x$ be this path and set $p_{k+1} = y$. Since all neighbours of $y$ except $x$ are blue, the path $p_0 \, \dots \, p_{k+1}$ is induced. If $u$ was adjacent to $x$ it would be coloured by~R2. Since it is uncoloured we thus have $k \geq 2$.
    Note that $p_2, p_3, \dots$ are not adjacent to any vertex in $C$, in particular, $p_1\not\in\{v_1,v_2,v_3\}$, otherwise there would be a shorter path from $C$ to $x$.
    
    If at least two of $v_1, v_2, v_3$, say $v_1$ and $v_3$, are not adjacent to $p_1$, then $u, v_1, v_3, p_1, p_2, p_3$ induce $S_{1,1,3}$. Hence, we may assume that $v_1$ and $v_3$ are adjacent to $p_1$. 
    Hence, $p_1, u, v_1$ and $p_1, u, v_3$ are triangles.
    If $p_1$ is adjacent to $x$ it is coloured red and thus, by R3, $u,v_1$ and $v_3$ are coloured, a contradiction to the assumption that $V(C) \subseteq Z$.
    Thus, we have $k\geq 3$ and that $p_1, v_1, v_3, p_2, p_3, p_4$ induce $S_{1,1,3}$, a contradiction.

    \medskip
    \noindent
    It remains to consider the case where there is no shortest path without blue vertices from~$C$ to $x$ that starts from $u$. We may assume without loss of generality that there is such a path starting from $v_1$. Let $v_1 = p_0 \, \dots \, p_k = x$ be this path and we again set $p_{k+1} = y$. As before, this path is induced and $p_2,p_3,\dots$ are not adjacent to any vertex in $C$. Since $v_1$ is uncoloured, we further get $k \geq 2$. 
    
    If $p_1$ is neither adjacent to $v_2$ nor to $v_3$ then $u,v_1,p_1,p_2, v_2,v_3$ is an induced $S_{1,1,3}$. 
    Thus, $p_1$ is adjacent to at least one of $v_2$ and $v_3$. We first consider the case where $p_1$ is adjacent to both $v_2$ and $v_3$. 
    If $k = 2$, then $p_1$ is a neighbour of $x$ and thus coloured red. Note that $p_1,u, v_1, v_2, v_3$ induce $K_{2,3}$. It follows from R5 that $C$ is coloured, a contradiction.
    Otherwise, $k \geq 3$. In this case $p_1,p_2,p_3,p_4, v_1, v_3$ induce $S_{1,1,3}$, a contradiction.
    
    We are left with the case where $p_1$ is adjacent to exactly one of $v_2$ and $v_3$. 
    Without loss of generality we may assume $p_1$ is adjacent to $v_3$ and not adjacent to $v_2$.
    Again, if $k \geq 3$, we find $S_{1,1,3}$ consisting of $v_1, v_3, p_1,\dots, p_4$. Else, we may assume that $k = 2$.
    We now consider the neighbour $y'$ of $y$. Note first that $y'$ is not adjacent to $x$, since $y$ is the only blue neighbour of~$x$.
    Further, if $p_1$ and $y'$ are adjacent, both have a neighbour of the other colour. Hence, their neighbourhoods, including $v_1,v_3$ are coloured by propagation, a contradiction to $C$ being uncoloured. 
    If $y'$ is adjacent neither to $v_1$ nor to $v_3$, we find an induced $S_{1,1,3}$ consisting of $p_1,v_1,v_3,x,y,y'$, a contradiction.
    Hence, $y'$ is adjacent to at least one of $v_1,v_3$. Without loss of generality we may assume $y'$ is adjacent to $v_3$.
    If $y'$ is adjacent to $u$, then $y', u, v_3$ form a triangle. Since $y'$ is coloured blue, by R3, $u$ and $v_3$ are coloured blue as well, a contradiction. Hence, $y'$ is not adjacent to $u$.

    If $y'$ is adjacent to none of $v_1, v_2$, we obtain $S_{1,1,3}$ induced by $u,v_1,v_2,v_3,y', y$. Hence, $y'$ is adjacent to at least one of them.
    Suppose for a contradiction that $y' v_1 \in E$. Recall that $p_1,x$ are red and $y, y'$ are blue.
    Since $v_1$ is adjacent to $p_1$ and $y'$, it has a red and a blue neighbour, so its third neighbour $u$ must have the same colour as $v_1$. By symmetry, the same holds for $v_3$, so $v_1, u, v_3$ are all coloured the same. If they are all red, the blue vertex $y'$ has two red neighbours, if they are all blue, the red vertex $p_1$ has two blue neighbours. In both cases we get a contradiction and thus, $v_1$ is not adjacent to $y'$. Note that this implies that $v_2$ is adjacent to $y'$.

    \begin{figure}
        \centering
        \begin{tikzpicture}
    \node[rvertex, label = below: ${x = p_2}$](x) at (0,0){};
    \node[rvertex, label = below: $p_1$](p1) at (1,0){};

    \node[bvertex, label = above: ${y = p_3}$](y) at (0,1){};
    \node[bvertex, label = above: $y'$](yp) at (1,1){};

    \node[evertex, label = above:${v_1}$](v1) at (2,0.25){};
    \node[evertex, label = above:$u$](u) at (3,0.25){};
    \node[evertex, label = right:$v_3$](v3) at (4,0.25){};
    \node[evertex, label = $v_2$](v2) at (2.5,0.75){};

    \draw[edge](x) -- (y);
    \draw[edge](y) -- (yp);
    \draw[edge](x) -- (p1);
    \draw[edge](p1) -- (v1);
    \draw[edge](v1) -- (u);
    \draw[edge](v2) -- (u);
    \draw[edge](v3) -- (u);
    \draw[edge](p1) to [bend right = 10] (v3);
    \draw[edge](v2) -- (yp);
    \draw[edge](v3) to [bend right = 20] (yp);
\end{tikzpicture}
        \caption{An illustration of the structure occurring in the proof of Theorem~\ref{t-s113}.}
        \label{f-s113}
    \end{figure}

    We are left with the situation as depicted in Figure~\ref{f-s113}.
    Suppose that $y$ has a second blue neighbour $y''$.
    Applying the arguments given for $y'$, we conclude that $y''$ is adjacent to $v_1,v_2$ or to $v_2,v_3$.
    Note that in both cases, $v_2$ has two blue neighbours, $y'$ and $y''$ and is thus coloured blue, a contradiction to $C$ being uncoloured.
    Hence, $y''$ does not exist. Thus, $y$ has degree exactly $2$ and $v_2, v_3$ are adjacent to $y'$.
    This proves the claim. 
    \end{claimproof}

\noindent
By Claim~\ref{t-s113}.1 we get that either $G[Z]$ is claw-free or $y$ has degree exactly $2$ and every claw in $G[Z]$ is adjacent to the blue neighbour $y'$ of $y$. In the latter case, we branch over all $O(n)$ options to colour the neighbourhood of $y'$ and propagate the resulting colouring. Since every claw in $G[Z]$ is adjacent to $y'$ this will colour at least one vertex of each claw.
We update $Z$ to be again the set of uncoloured vertices and conclude that now $G[Z]$ is claw-free.

We need another short claim.

\clapp{\ref{t-s113}}{2}
    No component of $G[Z]$ is an induced cycle on at least $4$ vertices.
\begin{claimproof}
    Suppose for a contradiction that $G[Z]$ has a component $C$ which is a cycle with at least $4$ vertices. Every vertex in $C$ has at most one red and one blue neighbour which is neither $x$ nor $y$. Two neighbours on the cycle do not have a common coloured neighbour, since otherwise they would have been coloured.
     We consider a shortest path from $C$ to $x$ avoiding blue vertices. Let $v$ be the vertex on $C$ where the path starts.
     Note that this path contains at least one vertex between $x$ and $v$, since the neighbourhood of $x$ is coloured, but $v$ is not. Further, we can add $y$ to the path and still obtain an induced path since all neighbours of $y$ except $x$ are coloured blue.
     Thus, we find an induced $S_{1,1,3}$ by taking $v$, its two neighbours on the cycle, and three vertices on the path from $v$ to $y$ via $x$, a contradiction.
     Hence, the claim follows.
\end{claimproof}

    \noindent
    Let now $Z_\Delta$ be the set of edges of $G[Z]$ which are contained in some triangle. 
    Let $C_1, \dots, C_r$ be the connected components of $G[Z_\Delta]$.
    Then, each of $C_1, \dots, C_r$ is monochromatic.
    We apply propagation rule R2 on each of the sets $C_1, \dots, C_r$ to potentially extend them with other vertices which will necessarily get the same colour. If due to the propagation two sets $C_1, \dots, C_r$ share a vertex, we unite them and continue the propagation with this set.
    This propagation might change the number of sets. After the propagation, we update the integer~$r$ and assume that we have $r$ sets $C_1, \dots, C_r$.
    
    Let $Z'\subseteq Z$ be the set of edges not contained in $Z_\Delta$.
    Since $G[Z]$ is claw-free and not a cycle, the graph $G[Z']$ has maximum degree~$2$ and is a collection of paths.
    We contract each of the sets $X,Y, C_1, \dots, C_r$ to a single vertex and obtain a graph $G'$. Note that this might result in a multigraph.

    \clapp{\ref{t-s113}}{3}
        $G$ has a red-blue $(x,y)$-colouring of value~$\mu$ if and only if $G'$ has an $(x,y)$-cut of size~$\mu$.
    \begin{claimproof}
        Suppose $G$ has a red-blue $(x,y)$-colouring of value~$\mu$. Recall that every component $X,Y, C_1, \dots, C_r$ is monochromatic, that is, no bichromatic edge is inside one of these components. Thus, every bichromatic edge in $G$ corresponds to an edge in~$G'$, and the cut in $G$ induced by the bichromatic edges is a cut in~$G'$ of the same size.

        For the other direction suppose that $G'$ has an $(x,y)$-cut of size~$\mu$. This cut partitions the vertex set into two sets. We colour the set containing $x$ red and the set containing $y$ blue. Recall that the graph $G[Z']$ is a collection of disjoint paths. Thus, every vertex in $G$ is adjacent to at most one vertex of the other colour and the colouring is valid. Note that the value of the colouring equals the size of the cut in $G'$.
    \end{claimproof}
    
    \noindent
    We follow this approach for every pair of vertices $x,y$. Whenever we get a contradiction, we discard the branch. For every branch that leads to a valid red-blue colouring, we remember the value of this colouring and minimize over all values of all valid red-blue colourings obtained in the process.

    The correctness of our algorithm follows from its description. Regarding the running time, note that by Lemma~\ref{l-propsafe} the propagation of the colouring can be done in polynomial time. Further, we consider $O(n^2)$ branches and for each branch, we apply a minimum cut algorithm. This can be done, for example, in $O(n^2|E|)$, using Dinic's algorithm~(\cite{Di70}).
    Thus, our algorithm runs in polynomial time.
\end{proof}

\noindent
For the proof of Theorem~\ref{T-P6+P4free}, we need the following lemma and apply Theorem~\ref{T-P6}. 

 \begin{lemma}\label{L-P4free}
     Let $G$ be a connected $P_4$-free graph on $n$ vertices. Then, $G$ has at most $2n$ different valid red-blue colourings.
 \end{lemma}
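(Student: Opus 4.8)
The plan is to use the well-known structure of $P_4$-free graphs (cographs): a connected $P_4$-free graph on at least two vertices is the \emph{join} of two or more smaller cographs. More precisely, if $G$ is connected and $P_4$-free with $|V(G)|\geq 2$, then $\overline{G}$ is disconnected, so $V(G)$ partitions into nonempty parts $V_1,\dots,V_t$ (with $t\geq 2$) such that every edge between distinct parts is present. The key observation is that in any valid red-blue colouring of $G$, this join structure severely restricts how the colours can be distributed, because a vertex in part $V_i$ is adjacent to \emph{all} of $V_j$ for $j\neq i$.

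\textbf{Key structural step.} First I would argue that, because $G$ is a join of $t\geq 2$ parts, a valid red-blue colouring of $G$ can contain at most two "mixed" configurations in a very limited sense. Concretely: if some part $V_j$ contains a blue vertex, then every red vertex outside $V_j$ has a blue neighbour, so each such red vertex has at most one blue neighbour; since $|V_j\setminus\{\text{that blue vertex}\}|$ could be large, in fact $V_j$ contains \emph{exactly one} blue vertex unless the red vertices outside $V_j$ number at most one. Iterating this reasoning across all parts shows that essentially one of the following holds: (i) all of $V(G)$ is monochromatic (excluded, since a valid colouring uses both colours, so this case contributes $0$), (ii) exactly one vertex has a colour different from all others, or (iii) the colouring splits as "one part $V_i$ gets one colour, everything else gets the other colour," possibly with one exceptional vertex. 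Counting these: configurations of type (ii) number at most $2n$ (choose the special vertex, choose its colour — but validity forces the colour split, so really at most $n$ such with a factor $2$ to be safe); configurations where one entire part is one colour and the rest the other number at most $t\leq n$, times $2$ for the colour assignment. Summing the cases gives at most $2n$ valid colourings.

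\textbf{Cleaner approach via induction on the join.} Alternatively — and this is probably the cleaner route — I would induct on $n$. If $G$ has one vertex, there are no valid red-blue colourings (need both colours), so $0\leq 2$. If $G=\overline{G_1}\cup\cdots$ is disconnected in the complement, write $G$ as the join $G_1 \ast G_2$ where $G_1$ is one part and $G_2$ the join of the rest (both $P_4$-free). A valid colouring of $G$: if both $G_1$ and $G_2$ contain both colours, then a red vertex of $G_1$ sees all blue vertices of $G_2$, forcing $|B\cap V(G_2)|\leq 1$, and symmetrically, which quickly collapses to $O(1)$ such colourings. If $G_1$ is monochromatic (say red), then either $G_2$ is all blue (giving $1$ colouring per colour choice of $G_1$, hence $O(1)$), or $G_2$ contains a red vertex; then that red vertex sees all of $G_1$... and one checks $G_1$ has at most one vertex, or $G_2$'s blue part has at most one vertex. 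Each branch either contributes $O(1)$ colourings or reduces to counting valid colourings of $G_2$ (with some boundary constraint), to which the induction hypothesis applies, yielding at most $2|V(G_2)| + O(1) \leq 2n$.

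\textbf{Main obstacle.} The delicate point is the bookkeeping: making the case analysis exhaustive and verifying that the "$O(1)$" terms, when summed over the recursion, still total at most $2n$ rather than, say, $2n + c\log n$ or $3n$. The cleanest fix is to prove a slightly stronger statement by induction — e.g., that a connected $P_4$-free graph on $n\geq 2$ vertices has at most $2n-2$ valid red-blue colourings, or to track the exact contribution of the "all of one part red, rest blue" colourings separately — so that the inductive step closes with the claimed bound rather than drifting. I would set up the induction to carry exactly the bound $2n$ (or a clean strengthening) and check the base cases $n=1,2$ explicitly.
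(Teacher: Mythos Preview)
Your approach via the cograph join decomposition is the right structural starting point, but you are working much harder than necessary, and the induction you sketch has real gaps. First, when $t=2$ and you set $G_2=G[V_2]$, the graph $G_2$ need not be connected (take $V_2=\{a,b\}$ with $ab\notin E(G)$), so your inductive hypothesis does not apply. Second, the bookkeeping you flag as ``delicate'' is genuinely unresolved: your case~(ii) already accounts for up to $2n$ colourings, and you then want to add the contributions of case~(iii) on top; nothing in the sketch shows the total stays at $2n$.

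The paper sidesteps all of this with a single direct observation. A connected $P_4$-free graph has a spanning complete bipartite subgraph $K_{k,\ell}$ with $1\leq k\leq \ell$ (this is exactly the join structure you are using). Now invoke Observation~\ref{O-bipartite}: any $K_{k,\ell}$ with $k\geq 2$ and $\ell\geq 3$ is monochromatic in every valid red-blue colouring, so $G$ has \emph{no} valid colouring at all. The only surviving cases are $k=1$ (a spanning star: choose the centre's colour and at most one leaf of the opposite colour, at most $2n$ colourings) and $k=\ell=2$ (a spanning $C_4$ on $n=4$ vertices, with at most $6\leq 2n$ valid colourings by inspection). No induction, no bookkeeping. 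The ingredient you are missing is precisely that the spanning $K_{k,\ell}$ with $k\geq 2,\ell\geq 3$ kills all colourings outright; once you see that, the lemma is immediate.
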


\begin{proof}

    It is well-known (see e.g.\ Lemma $2$ in~\cite{KP20}) that every connected $P_4$-free graphs has a spanning complete bipartite subgraph $K_{k,l}$ for some integers $1 \leq k \leq l$. 
    By Observation~\ref{basic-observations}, we get that $K_{k,l}$ is monochromatic if $k\geq 2$ and $l\geq 3$. If $k = 1$, that is, $G$ has a spanning star, there are at most $2n$ options to colour the graph ($2$ options to colour the centre vertex and $n$ options to choose at most one vertex with a colour different from that of the centre vertex).
    Otherwise, we get that $k = 2$ and $l=2$, that is, $G$ has a spanning $C_4$. It is easy to verify that there are at most $6 \leq 2n$ valid options to colour a $C_4$.
    Therefore, $G$ has at most~$2n$ different valid red-blue colourings.
\end{proof}

\begin{restatable}{theorem}{thmpsixpfourfree}
  \textsc{Minimum Matching Cut} is polynomial-time solvable for $(P_6+P_4)$-free graphs.
  \label{T-P6+P4free}
\end{restatable}

    \begin{proof}
Let $G = (V,E)$ be a $(P_6+P_4)$-free graph. If $G$ is $P_6$-free, we can determine whether $G$ has a minimum matching cut using Theorem~\ref{T-P6}. Thus, we may assume that $G$ is not $P_6$-free. Let $P$ be an induced $P_6$ of $G$. Note that we can find $P$ in $O(n^6)$.
We apply Observation~\ref{basic-observations} and search for a minimum red-blue colouring of $G$.

We branch over all possible colourings of $P$ and $N(P)$. There are at most $O(n^{6})$ such colourings since there is a constant number of colourings of $P$ and every vertex of $P$ has at most one neighbour of the opposite colour. If the colouring of $P$ and $N(P)$ is monochromatic, say all vertices are blue, then we branch over all $O(n)$ options of colouring one of the uncoloured vertices red. 
Let $B$ be the set of blue vertices of $G$ and let $R$ be the set of red vertices of $G$. We propagate the $(R,B)$-colouring using rules R1--R5. If we get a contradiction, we discard the branch.

    Since each of the six vertices of $P$ has at most one neighbour of the opposite colour, there are at most $6$ red and $6$ blue components in $G$.
    By Lemma~\ref{l-propconnectivity} this still holds after the propagation. 
Let $Z$ be the set of uncoloured vertices of $G$. Note that $G[Z]$ is $P_4$-free.
We make a first observation.

    \smallskip

        \clapp{\ref{T-P6+P4free}}{1}
         Let $v$ be a coloured vertex, say $v$ is blue. Suppose $v$ is adjacent to $w \in Z$. We colour $w$ and propagate the colouring. Regardless of the colour of $w$, we did not create an additional blue or red component.
        \begin{claimproof} 
            If $w$ is coloured blue, then, since $w$ is adjacent to the blue vertex $v$, we did not create a new connected component.
            
            Now assume that $w$ is coloured red. Let $C$ be the connected component of~$G[Z]$ containing~$w$. Then, since $v$ is blue, we colour all uncoloured neighbours of~$w$ red. 
            Note that by R4, the component $C$ has at least one vertex adjacent to a vertex in a red component.
            Let $p$ be such a vertex.
            Note that since $C$ is $P_4$-free, we have $\diam(C)\leq 2$.
            Therefore, either $w$ is adjacent to $p$ or they have a common neighbour.
            Since all neighbours of $w$ in~$C$ are coloured red, $p$ has two red neighbours and is therefore coloured red as well.
           Thus, $w$ is connected by a red path to a red component of $G$ and we did not create a new coloured component.
        \end{claimproof}

   \paragraph*{Blue Coast Processing}
    We repeat the following steps until we hit in Step~2 the same blue component a second time. Note that this requires at most $7$ iterations, since $G$ has at most $6$ blue components. 
    In the case where no connected component of size at least~$2$ remains in $G[Z]$, we apply Lemma~\ref{L-Indep Set} and either discard the branch or found a valid red-blue colouring.
    \begin{enumerate}
        \item Pick a connected component of $G[Z]$ of size at least~$2$.
        \item Pick a blue neighbour $w$ of this component.
        \item Let $s_1,s_2 \in C$ be such that $s_1 w, s_1s_2 \in E$. Let $q_1$ (resp.~$q_2$) be the only red neighbour of $s_1$ (resp.~$s_2$) and $b_2$ the only blue neighbour of $s_2$, if they exist. 
        \item Branch over all $O(n^5)$ colourings of $C\cup N(w)\cup N(q_1)\cup N(q_2)\cup N(b_2)$.
    \end{enumerate}

    \noindent
    Let $C$ be the currently chosen connected component of $G[Z]$. Note  that by~R4, every connected component of $G[Z]$ has a blue neighbour. Let $w$ be a blue neighbour of $C$ in the blue component $B_w$. Suppose first that we have not chosen $B_w$ before.
    Then $s_1$ and $s_2$ can each have at most one neighbour of the same colour, otherwise they would have been coloured by R2. Further, $C$ is $P_4$-free and has, by Lemma~\ref{L-P4free}, at most $O(n)$ red-blue colourings.

    Suppose now that we picked the blue component $B_w$ before, using a vertex~$v$. That is, there was a connected component $C'$ of $G[Z]$ with a blue neighbour~$v$ in~$B_w$. Let $s_1', s_2', q_1', q_2'$ and $b_2'$ be the vertices and neighbours of $C'$ considered while processing $C'$.
    Note that if there are several vertices $w \in B_w$ which are adjacent to $C$, we may assume that $\dist_B(v,w)$ is minimized among the options for~$w$.
    Further, $v \neq w$, since the neighbours of $v$ were coloured when processing $C'$ and $w$ has an uncoloured neighbour.
    Let now $s_1,s_2 \in C$ such that $s_1$ is adjacent to both $w$ and $s_2$. Let $v \, t_1 \, \ldots \, t_k \, w$ with $t_1,\ldots,t_k \in B_w$ be a shortest blue path between $v$ and $w$. Note that this implies that the path is an induced~$P_{k+2}$.

    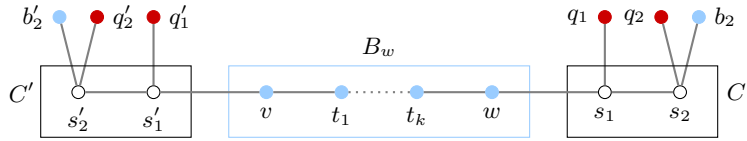
\begin{figure}
        \centering
        \begin{tikzpicture}
\begin{scope}
    \node[evertex, label=below:$s_2'$](s2p) at (0,0){};
    \node[evertex, label=below:$s_1'$](s1p) at (1,0){};

    \node[bvertex, label = left:$b_2'$](b2p) at (-0.25, 1){};
    \node[rvertex, label = right:$q_2'$](q2p) at (0.25,1){};
    \node[rvertex, label = right:$q_1'$](q1p) at (1,1){};

    \draw[edge](s1p) -- (s2p);
    \draw[edge](s1p) -- (q1p);
    \draw[edge](s2p) -- (q2p);
    \draw[edge](s2p) -- (b2p);

    \draw[] (-0.5, -0.6) rectangle (1.5,0.35);
    \node[] (c) at (-0.75,0){$C'$};
    
\end{scope}

\begin{scope}[shift = {(2.5,0)}]
    \node[bvertex, label = {below:$v$}](t0) at (0,0){};
    \node[bvertex, label = below:$t_1$](t1) at (1,0){};
    \node[bvertex, label = below:$t_{k}$](tk1) at (2,0){};
    \node[bvertex, label = {below:$w $}](tk) at (3,0){};

    \draw[edge](t0) -- (t1);
    \draw[edge, dotted](t1) -- (tk1);
    \draw[edge](tk1) -- (tk);

    \draw[lightblue] (-0.5,-0.6) rectangle (3.5,0.35);
    \node[] (bw) at (1.5, 0.6){$B_w$};

\end{scope}

\begin{scope}[shift = {(8,0)},  xscale = -1]
    \node[evertex, label=below:$s_2$](s2) at (0,0){};
    \node[evertex, label=below:$s_1$](s1) at (1,0){};

    \node[bvertex, label = right:$b_2$](b2) at (-0.25, 1){};
    \node[rvertex, label = left:$q_2$](q2) at (0.25,1){};
    \node[rvertex, label = left:$q_1$](q1) at (1,1){};

    \draw[edge](s1) -- (s2);
    \draw[edge](s1) -- (q1);
    \draw[edge](s2) -- (q2);
    \draw[edge](s2) -- (b2);

    \draw[] (-0.5, -0.6) rectangle (1.5,0.35);
    \node[] (c) at (-0.75,0){$C$};
    
\end{scope}

    \draw[edge](s1p) -- (t0);
    \draw[edge](s1) -- (tk);

\end{tikzpicture}
        \caption{The connected components $C$ and $C'$ adjacent to $B_w$ as considered in Claim~\ref{T-P6+P4free}.2.}
        \label{f-p4p6}
    \end{figure}

    \clapp{\ref{T-P6+P4free}}{2}
     There is an induced $P_6$ in $G[s_1, s_2, s_1', s_2', v, t_1, \dots, t_k, w]$. We may colour this path and its neighbourhood in polynomial time.
    \begin{claimproof}
        Note first that, due to R3, $s_2'$ is not adjacent to $v$ and~$s_2$ is not adjacent to $w$.
        Additionally, $s_i'$ and $s_j$, with $i,j\in \{1,2\}$, are not adjacent, otherwise $s_j$ would have been in $C'$.
        Further, $s_j$ cannot be adjacent to~$v$, since the neighbours of $v$ have been coloured while processing $C'$.
        If $w$ was uncoloured before processing $C'$, it cannot be adjacent to $s_i'$, otherwise $s_j$ would have been in $C$. If $w$ was blue before processing $C'$, then since $v \neq w$ and $s_1'$ had at most one blue neighbour, $w$ is not adjacent to $s_1'$. Also, $w$ is not adjacent to $s_2'$ since otherwise $w = b_2'$ and therefore, $s_1$ was coloured.
        Thus, we know that $G[v,s_1', s_2', w, s_1, s_2]$ is an induced $2P_3$. However, there might be edges from $t_\ell, \ell  \in \{1,\dots, k\}$ to $s_1', s_2', s_1$, and~$s_2$. 

        If there was an edge $t_\ell s_1$ or $t_\ell s_2$, for $\ell \in \{1,\dots, k\}$, then we would have chosen $t_\ell$ instead of $w$ due to the shorter distance from $v$ to $t_\ell$.
        Thus, there might only be edges from $t_\ell, \ell  \in \{1,\dots, k\}$ to $s_1', s_2'$.
        If there is no such edge, then $s_2'\, s_1'\, v\, t_1\, \dots\, t_{k}\, w\, s_1\, s_2$ is an induced path of length at least $6$ and we branch over all $O(n^5)$ colourings of $C\cup N(w)\cup N(q_1)\cup N(q_2)\cup N(b_2)$.
        Otherwise, let $\ell \in \{1,\dots, k\}$ be the largest integer such that one of $s_1'$ and $s_2'$ is adjacent to~$t_\ell$.
        
        If exactly $s_1'$ is adjacent to $t_\ell$, then $P' = s_2'\,s_1'\, t_\ell\, \dots\, t_{k}\, w\, s_1\, s_2$ is an induced path. If exactly $s_2'$ is adjacent to $t_\ell$, then $P' = s_1'\,s_2'\, t_\ell\, \dots\, t_{k}\, w\, s_1\, s_2$ is an induced path. In both cases, $P'$ contains an induced $P_6$.
        Note that $P'$ has length at most~$10$, since a $P_{11}$ contains an induced $P_6+P_4$. Thus, $k-\ell+1\leq 5$.
        We branch over all $O(n^{10})$ colourings of $C \cup N(t_\ell) \cup \dots \cup N(t_{k}) \cup N(w) \cup N(q_1) \cup N(q_2) \cup N(b_2).$

        Otherwise, both $s_1'$ and $s_2'$ are adjacent to $t_\ell$. Since $s_1', s_2', t_\ell$ form a triangle, by R3, they have all been coloured while processing $C'$.
        Note that $t_\ell$ is adjacent to $w$ since otherwise there is an induced $P_6$ contained in the induced path $s_1'\,t_\ell\,t_{\ell+1}\, \dots \, w\, s_1 \, s_2$.
        Therefore, $w$ was already blue when $C'$ was processed since otherwise $w$, $s_1$ and $s_2$ were in $C'$ and got coloured.
        It follows that $t_\ell$ is not adjacent to $v$; otherwise, it would have been adjacent to two blue vertices before processing $v$. Let $P' = v\,t_{1}\,\ldots\,t_{k}\,w\,s_1\,s_2$. Then, $P'$ contains an induced $P_6$. We branch over all $O(n^{11})$ colourings of \[C\cup N(w)\cup N(t_1)\cup \ldots\cup N(t_{k})\cup N(q_1)\cup N(q_2)\cup N(b_2). \]
    \end{claimproof}

        \smallskip\noindent
        Let $P'$ be the induced $P_6$ constructed in Claim~\ref{T-P6+P4free}.2. We may assume that $P'$ and its neighbourhood are coloured. We apply the propagation rules R1--R5 again. If we get a no-answer, we discard the branch, otherwise we continue.

        \clapp{\ref{T-P6+P4free}}{3}
         No uncoloured vertex and no red vertex of $G$ adjacent to an uncoloured vertex is adjacent to $P'$.
        \begin{claimproof}
            Note that we coloured all neighbours of $P'$. Thus, $P'$ has no uncoloured neighbours. Let $q$ be a red vertex adjacent to $P'$. Suppose for a contradiction that $q$ has an uncoloured neighbour $x$. Then, $q$ is not adjacent to a blue vertex of $P'$, otherwise $x$ would have been coloured. Suppose that $q$ is adjacent to $s_i$, for $i \in \{1,2\}$; the case of $s'_i$ works the same way. If $q$ was uncoloured when $s_i$ got coloured, then both $q$ and $x$ were in $C$ and therefore got coloured together with $s_i$. Otherwise, $q$ was $q_i$, the only red neighbour of $s_i$ and $x$ would have been coloured.
        \end{claimproof}

     \paragraph*{Red Coast Processing}

    Recall that $Z$ is the set of uncoloured vertices. If $G[Z]$ only consists of isolated vertices, we apply Lemma~\ref{L-Indep Set}. We either obtain a valid red-blue colouring in which case we remember its value or no such colouring exists and we discard the branch.

    So in the following, we may assume that $G[Z]$ has some connected component of size at least~$2$. After proving in Claim~\ref{T-P6+P4free}.4 that every red vertex has neighbours in at most one uncoloured component, we show that their structure is very restricted.

        \clapp{\ref{T-P6+P4free}}{4}
         Let $K$ be a connected component of $G[Z]$ of size at least $2$. Let $q\in R$ be a red neighbour of $K$. Every uncoloured neighbour of $q$ is in $K$.
        \begin{claimproof}
        Recall that by R4, every uncoloured component has a red neighbour.
            Suppose for a contradiction that there is a vertex $u\in Z$ which is adjacent to $q$ but not contained in $K$.
            Let $x_1,x_2 \in V(K)$ such that $q\,x_1, x_1\, x_2 \in E$. Then, $q\,x_2\notin E$, since otherwise $x_1$ and $x_2$ would be coloured by R3. Hence, $P'' = x_2\,x_1\,q\,u$ is an induced $P_4$. By Claim~\ref{T-P6+P4free}.3, $P' + P''$ is an induced $P_6+P_4$, a contradiction.
        \end{claimproof}

        \smallskip\noindent
        Let $K$ be a connected component of $G[Z]$ of size at least $2$.

        \smallskip\noindent
        \textbf{Case 1:} Suppose that there is a red vertex $q$ with at least two neighbours in $K$, say $x_1$ and~$x_2$. Then $x_1$ and $x_2$ cannot be adjacent since otherwise they would be coloured by R3. Since $x_1$ and $x_2$ are in the same connected component $K$ of $G[Z]$, they are connected by a path in $K$. If the shortest path in $K$ between them has length at least three, it is an induced~$P_4$, say $P''$, and by Claim~\ref{T-P6+P4free}.3 we obtain that $P' + P''$ is an induced $P_6+P_4$. Thus, the distance of $x_1$ and $x_2$ is exactly two and therefore there is a vertex $x_3 \in K$ such that $x_3$ is adjacent to both $x_1$ and $x_2$. Note that if $q$ was adjacent to $x_3$ then $x_1,x_2$ and $x_3$ would be red by R3.

        \clapp{\ref{T-P6+P4free}}{5}
            $K = \{x_1,x_2,x_3\}$
        \begin{claimproof}
            Suppose for a contradiction that there is an additional vertex $x_4$ in $K$. Without loss of generality, we may assume that $x_4$ is adjacent to $x_1$ or $x_3$. 
            Suppose first that $x_4$ is adjacent to $x_1$. Then, $x_4$ has to be adjacent to at least one of $q$ and $x_2$, since, by Claim~\ref{T-P6+P4free}.3, $x_2\,q\,x_1\,x_4$ cannot be an induced $P_4$. Adjacency to $q$ leads to a triangle $x_4\,q\,x_1$, implying that $x_1$ and $x_4$ are coloured by R3, a contradiction. Therefore, $x_4$ is adjacent to $x_2$. This implies that $K_{2,3}$ is a spanning subgraph of $K\cup \{q\}$. Thus, $K$ has to be monochromatic and coloured red by R5, a contradiction.
            
            Consider now the case where $x_4$ is not adjacent to $x_1$. Hence, $x_4$ is adjacent to~$x_3$. Then, $x_4$ is adjacent to $q$ or to both $x_1$ and $x_2$, since, by Claim~\ref{T-P6+P4free}.3, neither $q\,x_1\,x_3\,x_4$ nor $q\,x_2\,x_3\,x_4$ are induced $P_4$s. Note that by assumption $x_4$ is not adjacent to $x_1$. 
            If $x_4$ is adjacent to $q$, then $K\cup \{q\}$ has again $K_{2,3}$ as a spanning subgraph and by R5, we get a contradiction. Thus, in all cases, we get a contradiction, and the claim holds. 
        \end{claimproof}

        \smallskip\noindent
        From Claim~\ref{T-P6+P4free}.5 we got that $K = \{x_1, x_2, x_3 \}$. In the following, we analyse the blue neighbours of $K$. Let $y_i$ be the blue neighbour (if it exists) of $x_i$ for $i\in \{ 1,2,3\}$. Note that $y_1, y_2$ and $y_3$ are not necessarily different.
        Depending on the blue neighbours, we colour such a component~$K$ as follows:
        \begin{enumerate}
            \item  If $x_1$ and $x_2$ both have a blue neighbour, colour $K$ red.
            \item If exactly $x_1$ and $x_3$ have a blue neighbour, colour $x_2$ red and $x_1, x_3$ blue.
            \item If exactly $x_2$ and $x_3$ have a blue neighbour, colour $x_1$ red and $x_2, x_3$ blue.
            \item If exactly $x_1$ has a blue neighbour, colour $x_2$ red.
            \item If exactly $x_2$ has a blue neighbour, colour $x_1$ red.
            \item If exactly $x_3$ has a blue neighbour, colour $x_1$ red.
        \end{enumerate}

        \clapp{\ref{T-P6+P4free}}{6}
         $G$ has a minimum red-blue $(R,B)$-colouring before applying the aforementioned colouring rules if and only if it has one afterwards.
        \begin{claimproof}
            \textit{1.} If $x_3$ is coloured blue, then so are $x_1$ and $x_2$. Thus, $q$ has two blue neighbours, a contradiction. Hence, we can safely colour $x_3$ red. Then $x_1$ and $x_2$ have to be coloured red, too.\\
            \textit{2.-3.} If exactly $y_1$ (or $y_2$ by symmetry) and $y_3$ exist then there are $2$ ways of colouring~$K$. Either we colour $K$ red or we only colour $x_2$ red and $x_1$ and $x_3$ blue. Both colourings contribute $2$ to the value of the red-blue colouring. If we choose the latter colouring, $y_1$ and $y_3$ still have the possibility of having a red neighbour at another point in the colouring process. Since $q, x_1, x_2$ and $x_3$ do not have uncoloured neighbours outside of $K$, this rule is safe.\\
            \textit{4.-6.} 
            In all three cases, we can colour every vertex in $K$ red. This leads to a contribution of $1$ to the total value of the colouring and the blue neighbour is prevented from having a red neighbour outside of $K$.
            Alternatively, we can choose the following colourings.
            \begin{description}
                \item[4.] Colour $x_2$ red and $x_1$ and $x_3$ blue.
                \item[5.] Colour $x_1$ red and $x_2$ and $x_3$ blue.
                \item[6.] Colour $x_1$ red and $x_2$ and $x_3$ blue.
            \end{description}
            In each case, the colouring contributes $2$ to the total value and allows the blue neighbour of~$K$ to have a red neighbour outside of $K$.
            Note further, that in all three cases, there is one vertex ($x_2$ in Case 1, $x_1$ in Case 2 and 3) which is red in both possible colourings. Thus, we may safely colour this vertex red.
        \end{claimproof}

        \smallskip\noindent
        After the application of the above rules, we apply R1 and discard the branch if we get a no-answer. Otherwise, every component $K$ considered in this case got partially coloured and we are left with connected components of size~$2$ where both vertices have a private red neighbour and one vertex has a blue neighbour.
        This ends the case where $K$ has a red neighbour adjacent to at least two vertices in~$K$ and we proceed with the next case.
        
        \smallskip \noindent
       \textbf{Case 2:} Every red neighbour of $K$ is adjacent to exactly one vertex in $K$.           
       Suppose first that $K$ has exactly one red neighbour. Recall that $K$ has at least~$1$ blue neighbour, since otherwise $K$ would be red by R4. Thus, any colouring of~$K$ contributes at least $1$ to the total value of the red-blue colouring. Recall further that any red neighbour of $K$ has uncoloured neighbours only in $K$. Therefore, we colour $K$ blue, implying that it contributes exactly $1$ to the total value and does not affect the colouring of the other vertices.

       Consider now the case where $K$ has at least $2$ red neighbours $q_1,q_2$. Let $x_1,x_2\in K$ such that $q_1$ is adjacent to $x_1$ and $q_2$ is adjacent to $x_2$. Since $x_1$ and~$x_2$ are both in $K$, there is a path in $K$ connecting them. Thus, if $x_1$ and $x_2$ are not adjacent, $P'' = q_1\,x_1\,\dots\,x_2$ is an induced $P_4$ and thus, by Claim~\ref{T-P6+P4free}.3, $P' + P''$ is an induced $P_6 + P_4$, a contradiction. Hence, $x_1$ and $x_2$ are adjacent. Since for the same reason, $q_1\,x_1\,x_2\,q_2$ may not be an induced $P_4$ either, and any edge $x_iq_j$, $i,j \in\{1,2\}$ would result in a triangle with one coloured vertex (impossible by R3), there must be an edge between $q_1$ and~$q_2$.

        \clapp{\ref{T-P6+P4free}}{7}
           $K = \{x_1,x_2\}$.
       \begin{claimproof}
           Suppose for a contradiction that there is a vertex $x_3 \in V(K)$ adjacent to $x_1$. Since $q_2\,q_1\,x_1\,x_3$ cannot be an induced $P_4$, there is an edge from $x_3$ to $q_1$ or $q_2$, a contradiction since, by assumption, both $q_1$ and $q_2$ only have one uncoloured neighbour. The case where $x_3$ is adjacent to $x_2$ follows by symmetry. 
       \end{claimproof}

       \smallskip\noindent
       Let $y_1$ and $y_2$ be the unique, blue, not necessarily different neighbours of $x_1$ and $x_2$ (if they exist).
       If both $y_1$ and $y_2$ exist, there are two possibilities to colour $x_1$ and $x_2$.
       Either both red or both blue, in each case contributing $2$ to the total value of the colouring.
       If both are blue, $y_1$ and $y_2$ may have a red neighbour in another connected component. Recall that $q_1$ and $q_2$ do not have other uncoloured neighbours. Hence, we can safely colour $K$ blue.

       Thus, every component $K$ considered in this case either got coloured or is of size $2$ where both vertices have a private red neighbour and exactly one vertex has a blue neighbour.

       \paragraph*{Colouring the remaining vertices}
        
        Recall that $Z$ is the set of uncoloured vertices. 
        Note that we are left with two types of connected components of $G[Z]$.
        We have connected components of size $2$ with neighbours as depicted in Figure~\ref{fig-t12} (left) and connected components of size $1$ with a red and a blue neighbour, see Figure~\ref{fig-t12} (right).

        Let $W = Z\cup N(Z)$ and let $H$ be the graph with $V(H) = W$ and $E(H) = \{uu' \in E | u\in Z, u' \in W \}$. 
        That is, $H$ contains all edges of $G$ with at least one uncoloured endvertex. 

        Note that if $H$ consists of several connected components, their colourings do not influence each other. Therefore, we may consider the components separately and we assume in the following that $H$ is connected.

        \begin{figure}
            \centering
            \begin{tikzpicture}
    \begin{scope}[yscale = 0.7, xscale = 0.5]
        \node[rvertex](r1) at (-2,0){};
        \node[rvertex](r2) at (-2,1){};
        \node[bvertex](b1) at (2,0) {};
        \node[evertex](v1) at  (0,0) {};
        \node[evertex](v2) at  (0,1){};

        \draw[tedge] (r1) -- (v1);
        \draw[tedge] (r2) -- (v2);
        \draw[tedge] (v1) -- (v2);
        \draw[tedge] (v1) -- (b1);
    \end{scope}

    \begin{scope}[yscale = 0.7, xscale = 0.5, shift = {(7,0)}]
        \node[rvertex](r1) at (-2,0){};
        \node[evertex](v1) at (0,0){};
        \node[bvertex](b1) at (2,0){};

        \draw[tedge] (r1) -- (v1);
        \draw[tedge] (v1) -- (b1);
        
    \end{scope}
\end{tikzpicture}
            \caption{A component of size $2$ (left) and a component of size $1$ (right).}\label{fig-t12}
        \end{figure}
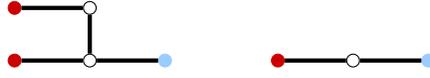

        Recall that by Claim~\ref{T-P6+P4free}.4, every red vertex which is adjacent to a component~$K$ of $G[Z]$ of size~$2$ has exactly one uncoloured neighbour which is inside $K$.
        We say that a coloured vertex $v$ has \emph{property $Q$} if every uncoloured neighbour of $v$ has to be coloured with the same colour as $v$.

        \clapp{\ref{T-P6+P4free}}{8}
           Suppose that a coloured vertex $v$ satisfies $Q$, then we can colour the whole component of $H$ containing $v$ by propagation or conclude that $G$ has no red-blue colouring which extends the current partial colouring.
        \begin{claimproof}
        Let $v$ be a coloured vertex satisfying $Q$. Let $T$ be the set of uncoloured neighbours of~$v$. Suppose that there is a vertex $w$ in $T$ with a second coloured neighbour $u$. By R2, $u$ has the opposite colour of $v$. Thus, if we colour $w$ according to $Q$, that is, with the colour of~$v$, then $u$ has a neighbour of the other colour. Hence, all other neighbours of $u$ have to have the same colour as $u$ and $Q$ is satisfied for $u$.

        Thus, if a vertex $v$ satisfies $Q$, then, after propagation, the coloured vertices at distance~$2$ of $v$ satisfy $Q$ and the uncoloured neighbours of $v$ are coloured. We continue the propagation exhaustively and either obtain a contradiction to the validity of the colouring and discard the branch, or the whole component is coloured.        
        \end{claimproof}

        \clapp{\ref{T-P6+P4free}}{9}
        Let $T$ be a connected component of $G[Z]$ of size $2$. If we colour $T$ red and propagate the colouring, then either we obtain a valid red-blue colouring of~$H$ or we conclude that $T$ is be blue in any valid red-blue colouring of $H$.
        \begin{claimproof}
            Let $T$ be a connected component of $G[Z]$ of size $2$. We colour the vertices of $T$ red. Then, the neighbours of $T$ satisfy property $Q$. For the two red neighbours of $T$, this follows immediately from Claim~\ref{T-P6+P4free}.4. For the blue neighbour of $T$, this follows since $T$ gets coloured red.
            The claim follows from application of Claim~\ref{T-P6+P4free}.8.
        \end{claimproof}

        \smallskip\noindent
        Let $k = \lvert \{ C : C \textrm{ is a connected component of size $2$ of $G[Z]$} \} \rvert$ be the number of connected components of size~$2$ in $G[Z]$. If $k=0$, we conclude by applying Lemma~\ref{L-Indep Set}.
        
        Otherwise, let $T\in H$ be a connected component of size $2$. 
        First, we colour $T$ red. By Claim~\ref{T-P6+P4free}.9, we either obtain in polynomial time a valid red-blue colouring and remember its value or we get that no such colouring exists and discard the branch.
        Second, we colour $T$ blue and propagate the resulting colouring. Then only $k-1$ components of size $2$ remain and we can repeat the same process and try to colour in red the uncoloured vertices of some component of size $2$.

    If at any point in our algorithm we discard a branch, we consider the next. For every valid red-blue colouring which we obtain, we remember its value and output the colouring with the minimum value. The correctness of our algorithm follows from its description. In the following, we analyse its run-time.

    By Lemma~\ref{l-propsafe}, the propagation of a colouring can be done in polynomial time. We consider $O(n^{48})$ branches, each of which can be processed in polynomial time. Therefore, the run-time of our algorithm is polynomial.
\end{proof}

\noindent
For our next theorem, we apply Theorem~\ref{T-Pk} twice, leading to a $P_4$-free dominating subgraph~$K$ of a dominating subgraph. Being $P_4$-free has the advantage that all but potentially one vertex of $K$ are coloured the same, say red. After some branching we are able to conclude by applying Lemma~\ref{L-monodom}.

\begin{theorem}\label{t-p8}
    \minmc{} is polynomial-time solvable for $P_8$-free graphs.
\end{theorem}

\begin{proof}
    Let $G = (V,E)$ be a $P_8$-free graph. We assume that $G$ is connected. 
    We apply Observation~\ref{basic-observations} and search for a minimum red-blue colouring of $G$. Theorem~\ref{T-Pk} states that $G$ has either a dominating $P_6$ or a dominating connected $P_6$-free subgraph and that such a subgraph can be found in polynomial time. If $G$ has a dominating $P_6$, we have a dominating set of bounded size and apply Lemma~\ref{l-smalldomset}. 
    In polynomial time, we either find a minimum red-blue colouring or conclude that no such colouring exists.

    Hence, we may assume that $G$ has a dominating set $D$ such that $G[D]$ is connected and $P_6$-free. By applying Theorem~\ref{T-Pk} again, we obtain that $G[D]$ either has a dominating $P_4$ or a connected dominating $P_4$-free subgraph, say it is $K$. 
    Note that a connected $P_4$-free subgraph has a complete bipartite graph $K_{r,s}$ as a spanning subgraph, for $r,s \geq 1$.
    
    We distinguish between two cases. First, we consider the case where $|K|\leq 4$. Note that this covers both the case of a dominating $P_4$ and of a dominating $P_4$-free subgraph where $r + s \leq 4$.
    Second, we consider the case where $|K|\geq 5$. The arguments are  similar in both cases, however in the first case, we can colour $N[V(K)]$ completely. Due to the fact that the dominating subgraph $K$ may be large in the second case, we cannot assume that the neighbourhood of $K$ is fully coloured. This leads to a significant increase in the complexity of the arguments needed and in particular makes it necessary to add a second recursive step.

\smallskip
\noindent
    \textbf{Case 1. }\textit{$G[D]$ is dominated by a connected dominating set of size at most~$4$.} \\ 
Let $D'$ be a connected dominating set of $G[D]$ of size at most~$4$.
    We branch over all colourings of $D'$. We denote by $R$ (resp.~$B$) the set of red (resp.~blue) vertices. If $D'$ is monochromatic, we apply Lemma~\ref{l-ddmp} and either find a minimum red-blue $(R,B)$-colouring and remember its value or conclude that no such colouring exists and discard the branch.

    We now assume that $D'$ is not monochromatic. That is, $D'$ contains at least one red and one blue vertex. Note that there might be two vertices of the same colour in $D'$ that are not connected in $D'$. Say they are red. This can happen if $D'$ is a $P_4$ with red ends.
    However, in every optimal solution of \minmc{} there is exactly one red and one blue component. Since $G$ is $P_8$-free, there is a monochromatic path in $G$ connecting those two vertices in $D'$ consisting of at most~$5$ intermediate vertices. We therefore branch over all $O(n^5)$ options to choose this path. Hence, we have one red and one blue component.

    We branch over all $O(n^4)$ valid colourings of $N(D')$ and propagate the colouring. Let $Z$ be the set of uncoloured vertices of $G$.
    If every vertex of $Z$ is adjacent to a red vertex, then $R$ dominates $Z$ and we apply Lemma~\ref{L-monodom}. 
    If we get a valid red-blue $(R,B)$-colouring of $G$, we remember its value, otherwise we get that no such colouring exists and discard the branch.

    Hence, there exists a vertex $x \in Z$ which is not adjacent to any vertex in $R$.
    As $x$ is at distance at most~$2$ of $D'$ and the neighbourhood of $D'$ is coloured, there is a vertex $b \in B$ which is adjacent to both $x$ and a blue vertex of $D'$.
    Let $C$ be the connected component of $x$ in $G[Z]$.
    Due to the application of R4, we know that $C$ is adjacent to $R$. Hence, since $x$ is not adjacent to $R$, it has a neighbour $y \in C$, see Figure~\ref{f-p7-dom-p2}. 
    Let $\beta$ be a blue neighbour of $b$ in~$D'$. If there are several we take the one with the shortest distance to a red vertex in $D'$. Let further $\alpha$ be a red vertex in $D'$ that is closest to $\beta$ in $D'$.
    
    \begin{figure}[t]
        \centering
        \begin{tikzpicture}
\begin{scope}
    \node[evertex, label=below:$y$](s2p) at (0,0){};
    \node[evertex, label=below:$x$](s1p) at (1,0){};


    \draw[edge](s1p) -- (s2p);

    \draw[] (-0.5, -0.6) rectangle (1.5,0.35);
    \node[] (c) at (-0.75,-0.4){$C$};
    
\end{scope}

\begin{scope}[shift = {(2.5,0)}]
    \node[bvertex, label = {below:$b$}](t0) at (0,0){};
    \node[bvertex, label = below:$\beta$](t1) at (1,0){};
    \node[rvertex, label = below:$\alpha$](tk1) at (2,0){};
    
    \node[rvertex, label = above:$r$](b) at (3,0){};
    \node[evertex, label = above:$w$](w) at (4,0){};

    \draw[edge](t0) -- (t1);
    \draw[edge](t1) -- (tk1);

    \draw[edge](tk1) -- (b);
    \draw[edge](b) -- (w);

    \draw[] (0.5,-0.6) rectangle (2.5,0.35);
    \node[] (bw) at (2.75, -0.4){$D'$};

\end{scope}

    \draw[edge](s1p) -- (t0);

\end{tikzpicture}
        \caption{$D'$ together with a component $C$ of size at least~$2$.}
        \label{f-p7-dom-p2}
    \end{figure}
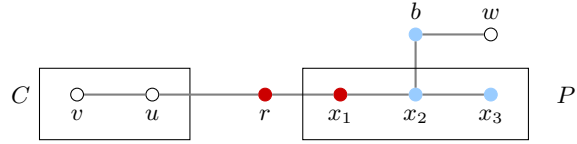

    \clapp{\ref{t-p8}}{2}
       $P' = y \, x \, b \, \beta \, \dots \, \alpha$ is an induced path.

    \begin{claimproof}
        To see this, note first that $\beta \, \dots \, \alpha$ is an induced path since it is a shortest path in $D'$. 
        Since $x$ and $y$ are both uncoloured, they cannot be adjacent to $D'$ and at most one can be adjacent to $b$.
        Further, if $b$ was adjacent to a red vertex in $D'$, $x$ would have been coloured blue. 
        Also, $b$ is not adjacent to any blue vertex on the path $\beta \, \dots \, \alpha$, by our choice of $\beta$.
        Thus, $P'$ is an induced path.
    \end{claimproof}

    \clapp{\ref{t-p8}}{3} 
    If $y$ is coloured blue, we can find in polynomial time a minimum red-blue $(R,B)$-colouring of $G$ or conclude that no such colouring exists.

    \begin{claimproof}
    We colour $y$ blue and propagate the colouring. Hence, by~R2, $x$ is blue as well. We branch over the $O(n^3)$ colourings of the neighbours of $b,x$ and $y$.
    If $Z$ is dominated by $B$ we apply Lemma~\ref{L-monodom}. We either obtain a red-blue $(R,B)$-colouring and remember its value or conclude that no such colouring exists and discard the branch.
    
    Let $w\in G[Z]$ be not adjacent to $B$. As $w$ is at distance~$2$ of $D'$, there is a vertex $r$ adjacent to both $D'$ and $w$, see Figure~\ref{f-p7-dom-p2}. We know that $r\in R$, since $N(D')$ is coloured and $r$ is not blue. Further, $r$ cannot be adjacent to any vertex in $B$, otherwise $w$ would have been coloured. If $r$ is adjacent to $\alpha$, then $y \, x \, b \, \beta \, \dots \, \alpha \, r \, w$ is an induced path containing at least two vertices in $D'$, so at least an induced $P_7$.
    Since $w$ is uncoloured and not adjacent to~$B$, there is an uncoloured neighbour $w'$ of $w$. Note that $w'$ is neither adjacent to any of $r, b, x,y$ nor to a vertex in $D'$, since otherwise it would be coloured. Hence, $y \, x \, b \, \beta \, \dots \, \alpha \, r \, w \, w'$ contains an induced $P_8$, a contradiction. 
    Thus, the claim follows.
    \end{claimproof}

    \noindent
    We first check whether colouring $y$ blue leads to a valid red-blue colouring, in which case we remember its value.
	Then, we colour $y$ red and propagate the colouring.
    Now the number of uncoloured vertices decreased and we check again whether $R$ dominates $Z$ and otherwise find another connected component of $G[Z]$ of size at least~$2$, try to colour the vertex in the role of $y$ blue and repeat the whole process.
    Whenever we discard a branch, we consider the next. Whenever we obtain a valid red-blue colouring, we remember its value. In every iteration, the number of uncoloured vertices decreases and we only recurse in the case where $y$ is red. Thus, we recurse at most $O(n)$ times, and for each step in the recursion, we consider $O(n^3)$ sidebranches in the case where $y$ is blue. Together with the initial branching to colour the neighbours of $D'$ and a potential branching to connect the red vertices in $D'$, we consider in total $O(n^{13})$ branches.
    Since the application of Lemma~\ref{L-monodom} and the propagation can be done in polynomial time, our algorithm runs in polynomial time in the case where $G[D]$ is dominated by a connected dominating set of size at most~$4$.

\smallskip
\noindent
\textbf{Case 2. }\textit{$G[D]$ is dominated by a subgraph of at least~$5$ vertices.}\\
Let $K$ be a connected subgraph of $G$ which dominates $G[D]$. Since we are not in Case~1, $|V(K)| \geq 5$. Further, $K$ has a spanning $K_{r,s}$ subgraph that is either a star, or $r \geq 2, s \geq 3$. In the latter case, $K$ is monochromatic in every valid red-blue colouring of $G$. If $K$ is a star, it may be monochromatic or bichromatic.

We now construct a partial colouring, depending on whether $K$ is monochromatic or a bichromatic star. The arguments will then be the same until mentioned otherwise.

Consider first the case where the spanning $K_{r,s}$ subgraph of $K$ is a bichromatic star. 
Let~$\alpha$ be the centre of the star, without loss of generality we colour $\alpha$ red. We branch over all $O(n)$ possibilities to colour a neighbour of $\alpha$ blue. We call the blue neighbour of $\alpha$ $\beta$.

Consider now the case where $K$ is monochromatic. We colour $K$ red. Assume first that all neighbours of $K$ are coloured red. 
Then, $N(K) \cup K$ dominates~$G$, we branch over all $O(n)$ options to colour some vertex of $G$ blue and apply Lemma~\ref{L-monodom}. We either obtain a valid red-blue colouring of $G$ and remember its value, or conclude that no such colouring exists and discard the branch.
Else, we branch over all $O(n)$ possibilities to colour a neighbour of $K$ blue. Let $\alpha$ be the vertex in $K$ with a blue neighbour and $\beta$ the blue neighbour of $\alpha$. 

In both cases, we propagate the colouring using R1--R5.
If we get a no-answer, we discard the branch, otherwise, we continue.
Let $R$ (resp.~$B$) be the set of red (resp.~blue) vertices and let $Z$ be the set of uncoloured vertices. Observe that both $R$ and $B$ are connected in $G$.

If every connected component of $G[Z]$ is of size~$1$, we apply Lemma~\ref{L-Indep Set}. We either obtain a minimum red-blue $(R,B)$-colouring of $G$ and remember its value, or conclude that no such colouring exists and discard the branch.
Hence, we may assume that there is a connected component $C$ of $G[Z]$ of size at least~$2$.
Recall that by R4, $C$ has to be adjacent to $B$. Let $x,y \in C$ such that $x$ and $y$ are adjacent and $x$ is adjacent to a blue vertex $b \in B$, see Figure~\ref{f-p8-case2}. 

Let $b \, p_1 \, \dots \, p_\ell \, \beta$ be the shortest path in $B$ from $b$ to $\beta$. Note first that due to R3, not both of $x$ and $y$ may be adjacent to the same vertex in $p_1, \dots, p_\ell$. This allows us to assume that neither $x$ nor $y$ is adjacent to any of $p_1, \dots, p_\ell$. Else, if $p_j$, for $j \in \{1, \dots, \ell\}$, is adjacent to $x$ or $y$, after potentially changing the role of $x$ and $y$, we replace $b$ by $p_j$.
We claim that the path $y \, x\, b \, p_1 \, \dots \, p_\ell \, \beta \, \alpha$ is induced. Note that $b \, p_1 \, \dots \, p_\ell \, \beta$ is an induced path since it was chosen as a shortest blue path from $b$ to $\beta$. By construction, $x$ and $y$ are not adjacent to any of $p_1, \dots, p_\ell$ and $y$ is not adjacent to $b$. Since $\beta$ and $\alpha$ are neighbours of different colour, their other neighbours are coloured alike and neither $x$ nor $y$ can be adjacent to them. In addition, $\alpha$ cannot have a second blue neighbour in $b, p_1, \dots, p_\ell$. Hence, the path is induced. Note that $\ell \leq 2$, since otherwise $y \, x\, b \, p_1 \, \dots \, p_\ell \, \beta \, \alpha$ contains an induced $P_8$, a contradiction.

\begin{figure}
    \centering
    \begin{tikzpicture}
\begin{scope}
    \node[bvertex, label=below:$y$](v1) at (0,0){};
    \node[bvertex, label=below:$x$](v2) at (1,0){};

    \node[bvertex, label = {below:$b$}](v3) at (2,0){};
    \node[bvertex, label = below:$\beta$](v4) at (3,0){};
    \node[bvertex, label = below:$b'$](v5) at (4,0){};
    \node[bvertex, label = below:$x'$](v6) at (5,0){};
    \node[bvertex, label = below:$y'$](v7) at (6,0){};

    \draw[edge](v1) -- (v2);
    \draw[edge](v2) -- (v3);

    \draw[edge, dotted](v3) -- (v4);
    \draw[edge](v4) -- (v5);
    \draw[edge](v5) -- (v6);
    \draw[edge](v6) -- (v7);

    \node[rvertex, label = {[label distance=-0.4em]180:$q_y$}](r1) at (0,1){};
    \node[rvertex, label=left:$q_x$](r2) at (1,1){};
    \node[rvertex, label = {left:$q_b$}](r3) at (2,1){};
    \node[rvertex, label = above:$\alpha$](r4) at (3,1){};
    \node[rvertex, label = above:$q_b'$](r5) at (4,1){};
    \node[rvertex, label = above:$q_x'$](r6) at (5,1){};
    \node[rvertex, label = above:$q_y'$](r7) at (6,1){};

    \draw[edge](v1) -- (r1);
    \draw[edge](v2) -- (r2);
    \draw[edge](v3) -- (r3);
    \draw[edge](v4) -- (r4);
    \draw[edge](v5) -- (r5);
    \draw[edge](v6) -- (r6);
    \draw[edge](v7) -- (r7);

    \draw[edge, dotted](2.4,1) -- (r4);

    \node[rvertex, label = left:$w$](w) at (1,2){};
    \node[evertex, label = right: $\gamma$] (gam) at (2.5,2){};

    \draw[dotted, edge] (1,1.3) -- (1,1.6);
    \draw[edge] (w) -- (gam);

    \draw[] (-0.4, -0.6) rectangle (1.4,0.3);
    \node[] (c) at (-0.75,-0.4){$C$};

    \draw[] (-0.4, 0.6) rectangle (2.4,1.3);
    \node[] (c) at (-0.75,0.8){$Q$};

    \draw[] (0.3, 1.6) rectangle (1.7,2.3);
    \node[] (c) at (0.05,1.8){$S$};
    
\end{scope}

\end{tikzpicture}
    \caption{An illustration of the structure in the proof of Theorem~\ref{t-p8}.}
    \label{f-p8-case2}
\end{figure}

\begin{myclaim}
    If $y$ is coloured blue we can find in polynomial time a minimum red-blue $(R,B)$-colouring of $G$ or conclude that no such colouring exists.
\end{myclaim}
\begin{claimproof}
    We colour $y$ blue and propagate the colouring. Hence, by R2, $x$ is blue as well. We branch over the $O(n^5)$ colourings of the neighbours of $b, x, y, p_1$ and $p_2$ (if they exist) and propagate the colouring using R1--R5.

    Let $Q \subseteq R$ be the set of red neighbours of $b,x,y,p_1$ and $p_2$ (if they exist). Note that due to R2, we have that $|Q| \leq 5$.
    Note further that all neighbours of $Q$ except $b,x, y, p_1$ and $p_2$ are red by propagation.
    Recall that $N(Q)$ and $N(\alpha)$ are already coloured. Hence, since $G$ is $P_8$-free, any such path needs at most $3$ newly coloured vertices, leading to $O(n^{15})$ branches.

    Let $S \subseteq R$ be the set of vertices of $R \setminus Q$ that are connected (by only using red vertices) to $\alpha$ only via $Q$.
    Note that $S$ might be empty.
    We partition $G[Z]$.
    Let $C_1 \subseteq Z$ be the set of uncoloured vertices which do not have a red neighbour or whose red neighbour is in $S$. 
    Let $C_2 \subseteq Z$ be the set of uncoloured vertices with a red neighbour in $R \setminus (Q \cup S)$.
    Note that this is indeed a partition of $Z$, since the neighbours of $Q$ have already been coloured.

    Suppose for a contradiction that a vertex $u \in C_2$ has an uncoloured neighbour $v$. Let $r$ be the unique red neighbour of $u$. We claim that $v\, u \, r \, \dots \, \alpha \, \beta \, \dots \, b \, x \, y$ contains an induced $P_8$. To see this, recall that the neighbourhoods of $\alpha, \beta, \dots, b, x, a$ are coloured, $\beta$ is not adjacent to any neighbour of $\alpha$ and $v,u$ are uncoloured. Hence, we get a contradiction. This implies in particular that $G[C_2]$ is an independent set and that $C_2$ is dominated by $R$.

    We check whether $R$ dominates $Z$. If this is the case, we apply Lemma~\ref{L-monodom}. If we get a valid red-blue $(R,B)$-colouring of $G$, we remember its value, otherwise we get that no such colouring exists and discard the branch.
    
    Hence, we may assume that there is an uncoloured vertex $u$ in $C_1$ which does not have a red neighbour.
    Since $K$ dominates $G[D]$ and $D$ dominates $G$, $u$ is at distance at most $2$ of~$K$.
    Let $v$ be a vertex on a shortest path from $u$ to $K$, that is, $v$ is adjacent to both $u$ and~$K$. 
    Suppose that $v$ is adjacent to a red vertex in $K$.
    We distinguish three cases.
    \begin{itemize}
        \item If $v$ is red, then $u$ has a red neighbour, a contradiction.
        \item If $v$ is blue, then $u$ is coloured by propagation, a contradiction.
        \item If $v$ is uncoloured, then $v\in C_2$. The observation above implies that $v$ has no uncoloured neighbour, a contradiction.
    \end{itemize}
    So in all three cases, we get a contradiction.
    Hence, every vertex in $C_1$ without a red neighbour is at distance~$2$ from a blue vertex in $K$.
    If $K$ is monochromatic, there is no blue vertex in $K$ and thus, every vertex in $C_1$ has a red neighbour. Hence, $R$ is dominating and we apply Lemma~\ref{L-monodom}. If we get a valid red-blue $(R,B)$-colouring of $G$, we remember its value, otherwise we get that no such colouring exists and discard the branch. 
    Thus, the claim follows in the case where $K$ is monochromatic.

    \smallskip
    We are left with the case where $K$ is a bichromatic star.
    Since $\beta$ is the only blue vertex in $K$, every vertex in $C_1$ without a red neighbour is at distance~$2$ from $\beta$ and has a blue neighbour in $N(\beta)$.
    Hence, we may assume that there is an uncoloured vertex $x'$ with a blue neighbour $b'$, which is not adjacent to a red vertex. Further,  $b'$ is adjacent to $\beta$. Let $y'$ be an uncoloured neighbour of $x'$, see Figure~\ref{f-p8-case2}. Note that $y'$ exists, since every component of $G[Z]$ is adjacent to a red vertex. We require a second side branch.

    \smallskip \noindent
    \textit{Suppose $y'$ is coloured blue.}
    We propagate the colouring and obtain by R2 that $x'$ is blue as well.
    We branch over the $O(n^3)$ colourings of the neighbourhoods of $b', x', y'$. Let $q_b', q_x',q_y'$ be the red neighbours of $b', x', y'$, if they exist. 
    Observe first that $x'$ and $b'$ had no red neighbour before colouring $N(x')$ and $N(b')$, else $x'$ would have been coloured. Since $N(Q)$ is coloured, we get that in particular, $q_b', q_x', q_y' \notin Q$ and $q_b', q_x' \notin N(Q)$.
    Second, $Q \, \dots \, \alpha \, \beta \, b' \, x'$ is induced, where $Q \dots  \alpha$ is a red path from $Q$ to $\alpha$ avoiding $q_x'$ and $q_b'$. Note that such a path exists, since there was a red path from every vertex in $Q$ to $\alpha$ before choosing $x'$. It is induced since no neighbour of $b', x'$ was red before choosing $x'$.

    In the following we show that, whenever $S$ has an uncoloured neighbour $\gamma$, we find an induced $P_8$.    
    Let $\gamma$ be an uncoloured neighbour of $S$.
    Note that $\gamma$ is not adjacent to any of $\alpha, \beta, b', x', y', q_b', q_x', q_y'$ nor to any vertex in $Q$, otherwise it would be coloured.

    Let $w$ be the neighbour of $\gamma$ in $S$.
    Suppose there is a red path $w \, \dots \,\alpha$ not using any of $q_b', q_x',q_y'$. 
    We claim that $\gamma \, w\, \dots \, \alpha \, \beta \, b' \, x' \, y'$ contains an induced~$P_8$. 
    It contains at least~$8$ vertices since $w \in S$, and so any red path $w \, \dots \, \alpha$ passes via $Q$.
    We saw that $Q \, \dots \,\alpha \, \beta \, b' \, x'$ is induced. Since the chosen path from $w$ to $\alpha$ avoids $q_b', q_x', q_y'$, the whole path is induced.

    Hence, we are left with the case where every red path $w \, \dots \, \alpha$ contains at least one of $q_b', q_x',q_y'$.
    Since, $q_b', q_x', q_y' \notin Q$, we get that they are either on the path $w \, \dots \, Q$ or on the path $Q\, \dots \, \alpha$. Since for any vertex in $Q$ there was a red path $Q \, \dots \, \alpha$ and $b', x'$ had no red neighbour before choosing $x'$, we only consider the latter case for $q_y'$.

    Observe that if $q_y'$ lies on a red path $Q\, \dots \, \alpha$ and $q_b'$ (resp.~$q_x'$) in $S$, then $q_y' q_b' \notin E$ (resp.~$q_y'q_x' \notin E$).
    We now distinguish several cases based on which vertices lie on a path $w \, \dots \, Q$ that is extendable to a red path $w \, \dots \, \alpha$. 
    \begin{itemize}
        \item There is a red path $w \, \dots \, Q$ that is extendable to a red path $w \, \dots \, \alpha$, where $w \, \dots \, Q$ does not containing any of $q_b', q_x', q_y'$.
        Since one of them is on every red path $w \, \dots \, \alpha$, we get that $q_y'$ is on every extension $Q \, \dots \, \alpha$.
        We get that $\gamma \, S \, Q \, \dots \, q_y' \, \dots \, \alpha \, \beta \, b' \, x'$ contains an induced $P_8$. 
        \item There is a red path $w \, \dots \, Q$ that is extendable to a red path $w \, \dots \, \alpha$ and contains only~$q_y'$. Then, $\gamma \, w \, \dots \, q_y' \, \dots \, Q \, \dots \,  \alpha \, \beta \, b' \, x'$ contains an induced $P_8$.
        \item There is a red path $w \, \dots \, Q$ that is extendable to a red path $w \, \dots \, \alpha$ and contains only~$q_x'$. 
        Recall that $q_x'$ has no neighbour in $Q$. Thus, $\gamma \, w \, \dots \, q_x' \, \dots \, Q \, \dots \, \alpha \, \beta \, b'$ contains an induced $P_8$. Note that this still holds if $q_y'$ lies on $Q \, \dots \, \alpha$, since then $q_y'q_x' \notin E$.
        \item There is a red path $w \, \dots \, Q$ that is extendable to a red path $w \, \dots \, \alpha$ and contains exactly $q_x'$ and $ q_y'$. Then, $\gamma \, w \, \dots \, \{q_x', q_y'\} \, \dots \, Q \, \dots \, \alpha \, \beta \, b'$ contains an induced $P_8$. Note that $q_x'$ and $q_y'$ may appear in any order and may have extra vertices between them.
        \item Otherwise, $q_b'$ lies on every red path $w \, \dots \, Q$ that is extendable to a red path $w \, \dots \, \alpha$. If there is such a path that does not contain $q_b$, then $\gamma \, w \, \dots \, q_b' \, \dots \, Q  \, \dots\, \alpha \, \beta \, \dots\, b$ contains an induced~$P_8$. To see this recall that $q_b'$ has no neighbour in $Q$, so there is at least one additional vertex.
        Else, $q_b$ is contained in every red path $w \, \dots \, Q$ that is extendable to a red path $w \, \dots \, \alpha$. Then, $\gamma \, w \, \dots \, q_b' \, \dots \, q_b  \, b \, \dots\, \beta \, \alpha$  contains an induced $P_8$. To see this, note that $q_b\alpha \notin E$, else $x$ would have been coloured blue before we chose it as an uncoloured vertex.
        Note that in both cases, wherever we add $q_x'$ and $q_y'$ on the path, we still find an induced path on at least~$8$ vertices.
    \end{itemize}
\noindent
Hence, in all cases we find an induced $P_8$, a contradiction to the assumption that $G$ is $P_8$-free. It follows, that the uncoloured neighbour $\gamma$ of $S$ cannot exist. Thus, $C_1$ only contains vertices without a red neighbour. Recall that by R4, every uncoloured component has a red neighbour. 
Let $v \in C_1$ and consider a path through its uncoloured component to a red neighbour of the component. 
This path necessarily contains another uncoloured vertex, since $v$ has no red neighbour. 
Let $w$ be the vertex on the path that is adjacent to a red vertex. Then, $w \in C_2$, a contradiction to the fact that every vertex in $C_2$ has no uncoloured neighbour. It follows that $C_1$ is empty.
Hence, all uncoloured vertices are contained in $C_2$ and thus form an independent set. We apply Lemma~\ref{L-Indep Set}. We either obtain a minimum red-blue $(R,B)$-colouring of $G$ and remember its value, or conclude that no such colouring exists and discard the branch. This ends the second side branch.

\smallskip\noindent
Thus, we first check whether colouring $y'$ blue leads to a valid red-blue $(R,B)$-colouring of~$G$ in which case we remember its value. This can be done in polynomial time, since the number of branches is polynomial and the application of Lemma~\ref{L-Indep Set} runs in polynomial time.

\smallskip\noindent
\textit{We colour $y'$ red and propagate the colouring.} The number of uncoloured vertices decreases and we repeat the process from the point where we checked whether all connected components of $G[Z]$ have size~$1$ (after having coloured $y$). Whenever we obtain a valid red-blue colouring, we remember its value. Whenever we discard a branch, we consider the next.
In every iteration of choosing $y'$, the number of uncoloured vertices decreases. We only recurse in the case where $y'$ is red and thus at most $O(n)$ times. For each step in the recursion, we consider $O(n^3)$ sidebranches in the case where $y'$ is blue, resulting in $O(n^4)$ branches.

Hence, if we colour $y$ blue, we can find in polynomial time a minimum red-blue $(R,B)$-colouring of $G$ or conclude that no such colouring exists and thus, the claim follows.
\end{claimproof}

\noindent
We first check whether colouring $y$ blue leads to a valid red-blue $(R,B)$-colouring of $G$ in which case we remember its value. Note that this can be done in polynomial time.
Then, we colour $y$ red and propagate the colouring. The number of uncoloured vertices decreases and we repeat the process from the point where we checked whether all connected components of $G[Z]$ have size~$1$. Whenever we obtain a valid red-blue colouring, we remember its value. Whenever we discard a branch, we consider the next.
In every iteration of choosing $y$, the number of uncoloured vertices decreases. We only recurse in the case where $y$ is red and thus at most $O(n)$ times. For each step in the recursion, we consider $O(n^5\cdot n^{15}\cdot n^4)$ sidebranches in the case where $y$ is blue, resulting in $O(n^{26})$ branches.

\medskip \noindent
The correctness of our algorithm follows from its description. If we found a valid red-blue colouring, we output the smallest value of any valid red-blue colouring we obtained, otherwise we return that no such colouring exists. Since in both cases our algorithm runs in polynomial time, the total running time is polynomial.
\end{proof}

\section{Hardness Results}
\label{sec:hardness}

For our hardness results, we reduce from \textsc{Vertex Cover}.
A \emph{vertex cover} is a set $S$ of vertices such that every edge is incident to a vertex of $S$. In the problem \textsc{Vertex Cover}, we are given an instance $(G,k)$ and ask whether $G$ has a vertex cover of size~$k$.
The problem \textsc{Vertex Cover} is well known to be \NP-complete, see e.g.~\cite{Ka72}.

\begin{theorem}\label{t-3p3}
\minmc{} is \NP-hard for $3P_3$-free graphs of radius~$2$ and diameter~$3$.
\end{theorem}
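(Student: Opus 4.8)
The plan is to reduce from \textsc{Vertex Cover}, which is \NP-complete already on graphs without isolated vertices. Given an instance $(G,k)$ with $V(G)=\{v_1,\dots,v_n\}$, $E(G)=\{e_1,\dots,e_m\}$, $\deg_G(v_i)=d_i\ge 1$ and $m\ge 2$, I would build $G'$ from two vertex-disjoint cliques plus one gadget per vertex of $G$: a clique $C_R$ on a vertex $c_e$ for each edge $e$ together with one extra vertex $\rho$; a clique $C_B$ on a vertex $\gamma_u$ for each vertex $u$ of the gadgets described next together with one extra vertex $c^*$; and for each $v_i$ a clique $D_i$ on $d_i+2$ vertices, of which $d_i$ are designated \emph{edge-ports} $y_i^{e}$, one per edge $e\ni v_i$. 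The only further edges are $c_e y_i^{e}$ and $c_e y_j^{e}$ for every $e=v_iv_j$, the edge $u\gamma_u$ for every gadget vertex $u$, and the single edge $c^*\rho$. The intended reading, via Observation~\ref{o-cutcolouring}, is that $C_R$ takes one colour and $C_B$ the other, and a matching cut corresponds to choosing for each $i$ whether $D_i$ is coloured like $C_R$ (``$v_i$ is in the cover'') or like $C_B$.

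I would first check the structural constraints. For $3P_3$-freeness: deleting $C_R\cup C_B$ leaves exactly the pairwise anticomplete cliques $D_1,\dots,D_n$, i.e.\ a cluster graph, so every induced $P_3$ of $G'$ meets $C_R\cup C_B$; among three pairwise anticomplete induced $P_3$'s, two would meet the same clique and hence be joined by an edge — a contradiction. For the metric: $c^*$ reaches $C_B$ in one step, $C_R$ via $\rho$, and each gadget vertex $u$ via $\gamma_u$, so its eccentricity is $2$; since no vertex is universal, $G'$ has radius $2$. A short case analysis shows all distances are at most $3$, while $c_e$ and a non-port vertex of a $D_k$ with $v_k\notin e$ lie at distance $3$, so $G'$ has diameter $3$.

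Then I would prove correctness using Observation~\ref{o-Kr}: $C_R$, $C_B$ and every $D_i$ are cliques on at least three vertices, hence monochromatic. Up to swapping colours $C_R$ is red, and then $C_B$ must be blue (if $C_B$ were red, an edge-port of any blue $D_i$ would have the two red neighbours $c_e$ and $\gamma_{y_i^{e}}$, forcing all $D_i$ red and leaving no blue vertex). A colouring is valid precisely when the red gadgets form a vertex cover $S$ of $G$: the constraint at each $c_e$ forbids both of $D_i,D_j$ being blue for $e=v_iv_j$, and conversely any vertex cover yields a valid colouring. Counting bichromatic edges, $c^*\rho$ contributes $1$, a blue $D_i$ contributes its $d_i$ edges to $C_R$, and a red $D_i$ contributes its $d_i+2$ edges to $C_B$, for a total of $1+\sum_i d_i+2|S|=1+2m+2|S|$. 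Hence the minimum matching cut of $G'$ has size $1+2m+2\,\tau(G)$, so $(G,k)$ is a yes-instance iff $G'$ has a matching cut of size at most $1+2m+2k$; as $|V(G')|$ is polynomial in $|V(G)|+|E(G)|$, this is a polynomial reduction.

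The step I expect to be the main obstacle is reconciling the three simultaneous requirements — radius $2$, diameter $3$, and $3P_3$-freeness — with the arithmetic of the reduction. A single forced-monochromatic ``hub'' clique does not work, because being adjacent to many vertices of the opposite colour immediately invalidates a colouring; the resolution is precisely the pair $C_R,C_B$ of opposite-coloured cliques, with $C_R$ carrying the edge constraints and $C_B$ serving the double role of radius-$2$ centre (through $c^*$) and of the device that makes a red $D_i$ cost exactly two more bichromatic edges than a blue one — which is what turns the cut size into an affine function of $\tau(G)$. Making these roles coexist while keeping all the private-neighbour and degree conditions consistent is where the care is required.
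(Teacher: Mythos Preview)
Your reduction is correct and the verification of $3P_3$-freeness, radius~$2$, diameter~$3$, and the equivalence with \textsc{Vertex Cover} all go through as you describe. The two-hub-clique idea and the pigeonhole argument for $3P_3$-freeness are the same as in the paper, but your gadgetry is genuinely different and in fact leaner.

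The paper attaches to every $v\in V(G)$ \emph{two} cliques $C_v$, $C_v'$ (forced to get opposite colours via a small $4$-cycle through $u_v,u_v'$), uses a separate ``cover gadget'' of $|E(G)|+1$ extra matched edges to make membership in the cover expensive, and lets the edge gadgets contribute $0$ or $1$ each; the cut size therefore lies in an interval of width $|E(G)|$ and $k$ is recovered by a floor division. Your construction collapses all this: a single clique $D_i$ per vertex, with the two non-port vertices doing the work of the cover gadget (a red $D_i$ pays exactly two more bichromatic edges to $C_B$ than a blue one pays to $C_R$), so the value is the exact affine function $1+2m+2|S|$ of the cover size. This buys you a smaller instance and a cleaner equivalence (no floor), at the cost of having $C_B$ be very large (one private neighbour per gadget vertex). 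The paper's version keeps the hub cliques smaller and separates the roles of ``forcing opposite colours'' and ``making cover membership costly'' into distinct gadgets, which is perhaps easier to reuse, but your argument is the more elementary proof of the theorem as stated.
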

\begin{proof}
Let $(G,k)$ be an instance of \textsc{Vertex Cover}. We construct a graph $G'$ from~$G$, consisting of vertex gadgets, edge gadgets, and cover gadgets (which will ensure that every edge of $G$ is covered) as follows.
For each vertex $v \in V$ we construct a \emph{vertex gadget} consisting of two cliques $C_v$ and $C_v'$,  where $C_v$ is of size $|E| + 2$ and $C_v'$ of size $\max(3,\degree(v) +1)$, and two vertices $u_v$ and $u_v'$. We pick a vertex $w$ in $C_v$ and $w'$ in $C_v'$ and add the edges $u_vw, wu_v', u_vw', w'u_v'$ to the gadget, see Figure~\ref{fig:3p3_constr}.
\begin{figure}[t]
    \centering
    \begin{tikzpicture}
    \tikzset{
        rahmen/.style={
            rounded corners = 5pt,
            draw,
            dashed
        }
    }

    \newcommand{\vdotss}[2]{%
        \node at ($(#1)!0.5!(#2)$) {\hspace{0.7pt}\rotatebox{90}{$\cdot$\hspace{-1pt}$\cdot$\hspace{-1pt}$\cdot$}}%
    }
    \newcommand{\fframe}[3]{%
        \draw[rahmen,#1] ($(#2.north west)+(-0.4,0.4)$) rectangle ($(#3.south east)+(0.4,-0.4)$)%
    }
    \newcommand{\rclique}[2]{%
        \expandafter\node[vertex,draw=#1,fill=#1] (#2 k) at (0,1.5) {};
        \expandafter\node[vertex,draw=#1,fill=#1] (#2 2) at (0,0.5) {};
        \node[vertex,draw=#1,fill=#1] (#2 1) at (0,0) {};
        \vdotss{#2 2}{#2 k};
    }
    \newcommand{\clique}[2]{%
        \rclique{#1}{#2};
        \fframe{#1}{#2 k}{#2 1};
    }
    \newcommand{\iclique}[2]{%
        \expandafter\node[vertex,draw=#1,fill=#1] (#2 k) at (0,0) {};
        \expandafter\node[vertex,draw=#1,fill=#1] (#2 2) at (0,1) {};
        \node[vertex,draw=#1,fill=#1] (#2 1) at (0,1.5) {};
        \vdotss{#2 2}{#2 k};
        \fframe{#1}{#2 1}{#2 k};
    }

    \begin{scope}[xscale=0.5,yscale = 0.9, rotate = 90]
    \rclique{nicered}{u};
    \begin{scope}[xshift=2cm,yshift=1cm]
        \clique{lightblue}{cu};
        \node[](t) at (0.6,1){$C_{v_1}$};
    \end{scope}
    \begin{scope}[xshift=2cm,yshift=-2.5cm]
        \iclique{nicered}{cpu};
        \node[](t) at (0.6,0.4){$C_{v_1}'$};
    \end{scope}
    \node[bvertex] (up) at (4,0) {};

    \begin{scope}[yshift=-7cm]
        \rclique{nicered}{v};
        \begin{scope}[xshift=2cm,yshift=1cm]
            \clique{nicered}{cv};
            \node[](t) at (0.6,1){$C_{v_2}$};
        \end{scope}
        \begin{scope}[xshift=2cm,yshift=-2.5cm]
            \iclique{lightblue}{cpv};
            \node[](t) at (0.6,0.4){$C_{v_2}'$};
        \end{scope}
        \node[bvertex] (vp) at (4,0) {};
    \end{scope}

    \node[rvertex] (e) at ($(u 1)!0.5!(v k)$) {};

    \node[rvertex, label = left:$a$] (a) at (0, -10.5) {};
    \node[bvertex, label = left: $b$] (b) at (4, -10.5) {};
    
    \fframe{nicered}{u k}{a};
    \node[](t) at (0,-11.7){$C$};
    \fframe{lightblue}{up}{b};
     \node[](t) at (4,-11.7){$C'$};
    
    \draw[tedge] (a) -- (b);

    \draw[edge] (e) -- (cpu 2);
    \draw[tedge] (e) -- (cpv 2);

    \draw[tedge] (u 1) -- (cu 1);
    \draw[tedge] (u 2) -- (cu 2);
    \draw[tedge] (u k) -- (cu k);
    \draw[edge] (cu 1) -- (up);
    \draw[edge]  (u 1) -- (cpu 1);
    \draw[tedge] (cpu 1) -- (up);

    \draw[edge] (v 1) -- (cv 1);
    \draw[edge] (v 2) -- (cv 2);
    \draw[edge] (v k) -- (cv k);
    \draw[tedge] (cv 1) -- (vp);
    \draw[tedge]  (v 1) -- (cpv 1);
    \draw[edge] (cpv 1) -- (vp);
    \end{scope}

\end{tikzpicture}
    \caption{
    The graph $G'$ constructed for a two adjacent vertices $v_1$ and $v_2$. 
    The given colouring indicates that $v_1$ is in the vertex cover $S$ while $v_2$ is not.
    }
    \label{fig:3p3_constr}
\end{figure}
We let $C$ and $C'$ be cliques with vertex sets $\{u_v, v \in V\}$ and $\{u_v', v \in V\}$, respectively. That is, we connect the vertices $u_v$ and $u_v'$ of the variable gadgets to two cliques $C$ and $C'$.

Consider an edge $e = vv'$ in $E$.  We construct an \emph{edge gadget} as follows.
Pick a vertex $w \in C_{v}'$ and a vertex $w' \in C_{v'}'$, such that $w$ and $w'$ do not have a neighbour outside of $C_v'$ and $C_{v'}'$.
Add a new vertex $e$ to $C$ and add the edges $ew, ew'$.

For each $v \in V$, we add $|E| + 1$ vertices to $C$. We connect these vertices to $|E|+1$ vertices in $C_v$ such that each of these vertices in $C_v$ has exactly one neighbour among the additional vertices in $C$ and vice versa. We call this the \emph{cover gadget} of $v$.

As a last step, we add vertices $a$ to $C$ and $b$ to $C'$ together with the edge $ab$.
Note that when adding vertices to $C, C'$ and $C_v$, for $v \in V$, we add edges such that these sets remain cliques.

We first show that $G'$ is $3P_3$-free and has radius~$2$ and diameter~$3$.
To see that $G'$ is $3P_3$-free, note first that $G' - (C+C')$ is a disjoint union of cliques and thus contains no induced path consisting of more than $2$ vertices. Hence, every induced $P_3$ in $G'$ contains at least one vertex of one of the cliques $C$ and $C'$. Since no two induced paths can contain a vertex in the same clique, we conclude that $G'$ is $3P_3$-free.
The distance from $a$ to any vertex in $C$ and to $b$ is one. Since every vertex in the cliques $C_v$ and $C_v'$ for any $v \in V$ is adjacent to a vertex in $C$, their distance to $a$ is two. Further, since the distance of any vertex in $C'$ to~$b$ is one, the distance to $a$ is two and thus $G'$ has radius~$2$.
The distance between any two vertices in $C$ and $C'$ is at most~$3$, which also holds for any two vertices in $G'- (C + C')$. Thus, the diameter of $G'$ is at most~$3$.

\medskip
\noindent
We claim that $G$ has a vertex cover of size $k$ if and only if $G'$ has a matching cut of size $\mu$ satisfying
\begin{equation}
\label{eq:p3free}
2|V| + (1+|E|)k + 1 \leq \mu \leq 2|V| + (1 + |E|)k + |E| + 1
\end{equation}
Note that by Observation~\ref{basic-observations} this is the case if and only if $G'$ has a red-blue colouring of value~$\mu$.

First, suppose that $G$ has a vertex cover $S$ of size $k$. We construct a red-blue colouring as follows.
We colour $C$ red and $C'$ blue.
Let $v \in V$. If $v \in S$ then colour $C_v$ blue and $C_v'$ red, otherwise colour $C_v$ red and $C_v'$ blue.
To see that this colouring is valid, note first that every vertex has at most two neighbours outside of the clique in which it is contained.
Let $v \in C$ be a vertex contained in a vertex gadget. Then, since $C_v$ and $C_v'$ are coloured differently, $v$ has at most one blue neighbour. By the same argument, a vertex $v' \in C'$ contained in a vertex gadget has at most one red neighbour.
Every vertex in $C_v$ and $C_v'$ has at most one neighbour in $C$ and one in $C'$, so regardless of its colour it has at most one neighbour of the other colour.
Consider a vertex $e \in C$, corresponding to an edge $e = uv$ of~$G$. Since $S$ is a vertex cover, at least one of $u$ and $v$ is contained in $S$ and thus at least one of $C_u'$ and $C_v'$ is red. 
All other vertices in $C$ or $C'$ have at most one neighbour outside of their clique. 

We compute the value of the colouring. Every vertex gadget has two bichromatic edges, so the vertex gadgets contribute a value of $2|V|$.
Every edge gadget contains $0$ or $1$ bichromatic edge, thus their contribution is between $0$ and $|E|$.
For the cover gadgets, if a vertex $v \in V$ is contained in $S$, then the corresponding cover gadget contributes a value of $|E + 1|$ and otherwise contributes a value of $0$.
Finally, the edge $ab$ contributes one to the value of the colouring.
In total, the value $\mu$ of the red-blue colouring satisfies Equation~(\ref{eq:p3free}).

\medskip \noindent
We now consider a fixed red-blue colouring of $G'$ of value $\mu$ satisfying Equation~(\ref{eq:p3free}) for some $k\in\mathbb N$.
Note first that the cliques $C$, $C'$, and $C_v$ and $C_v'$ for all $v \in V$ are monochromatic in any valid red-blue colouring of $G'$.
\begin{myclaim}\label{c:3p3:cliquesdiffer1}
The cliques $C$ and $C'$ have different colours in every valid red-blue colouring of~$G'$. Similarly, for any $v \in V$, the cliques $C_v$ and $C_v'$ have different colours in every valid red-blue colouring of $G'$.
\end{myclaim}
\begin{claimproof}
We first show that $C$ and $C'$ have different colours.
Suppose for a contradiction that they have the same colour, say $C$ and $C'$ are both red.
Then for every $v \in V$ the vertices in $C_v$ and $C_v'$ that are adjacent to both $C$ and $C'$ have each two red neighbours and are thus red.
Hence, the cliques $C_v$ and $C_v'$ are all red, and thus all vertices of $G'$ are red, a contradiction.

We now show that for any $v \in V$, the cliques $C_v$ and $C_v'$ have different colours.
Suppose that both are coloured the same, say both are red. Then the blue vertex $v' \in C'$, which is adjacent to one vertex of each of $C_v$ and $C_v'$, has two red neighbours, a contradiction.
\end{claimproof}

\smallskip\noindent
Thus, each vertex gadget contributes a value of $2$ in every valid red-blue colouring of $G'$ .
Further, every edge gadget contributes a value of at most~$1$, since its vertex in $C$ has at most one neighbour of the other colour.
Also, the edge $ab$ is bichromatic and thus, contributes a value of $1$.
Without loss of generality and due to Claim~\ref{c:3p3:cliquesdiffer1} we may assume that $C$ is red and $C'$ is blue.
We define a set $S$ containing all vertices $v \in V$ such that $C_v$ is coloured blue.
Note that a cover gadget for some $v \in V$ contributes $(1 + |E|)$ bichromatic edges if and only if the clique $C_v$ is blue, that is, if and only if $v \in S$.
From Equation~(\ref{eq:p3free}) we immediately get $(1 + |E|)k \leq \mu - 2|V| - 1 \leq (1 + |E|)k + |E|$ and thus there are 
$\left\lfloor \frac{\mu - 2|V| - 1}{1 + |E|} \right\rfloor = k$ vertices in $S$.

It remains to show that $S$ is a vertex cover of $G$.
Consider an edge $uv \in E$. $S$ is a vertex cover if at least one of $u$ and $v$ is in $S$. 
That is, at least one of $C_u$ and $C_v$ has to be blue.
Suppose that this is not the case, that is, both $C_u$ and $C_v$ are red and thus, by Claim~\ref{c:3p3:cliquesdiffer1} both $C_u'$ and $C_v'$ are blue. Since they are contained in an edge gadget and thus have a common red neighbour in $C$, this contradicts the validity of the colouring.
\end{proof}

\noindent
If we replace the cliques by complete bipartite graphs, we obtain a similar result for bipartite graphs. However, the radius and diameter increase by~$1$ and the construction is not $3P_3$-free.

\begin{restatable}{theorem}{thmhardnessbipartite}
    \label{t-biprad3}
\minmc{} is \NP-hard for bipartite graphs of radius~$3$ and diameter~$4$.
\end{restatable}
\begin{proof}
    Let $(H,k)$ be an instance of \textsc{Vertex Cover}.
    Let $G$ be the graph constructed for this instance in the proof of Theorem~\ref{t-3p3}.
    We construct a bipartite graph $G'$ with partition classes $A$ and $B$ as follows. Replace every clique $C$ of size~$k$ by a complete bipartite graph $C'$ with partition classes each of size~$k$.
    We say that a vertex $u \in V(C)$ corresponds to two vertices in $V(C')$, $u_a \in A\cap V(C')$ and $u_b \in B\cap V(C')$.
    Let $u, v\in V$ and let $u_a,u_b, v_a$ and $v_b$ be the corresponding vertices.
    If $uv \in E$ we add the edges $u_av_b$ and $u_bv_a$.

    Thus, $G'$ is clearly bipartite since each edge connects a vertex in $A$ with a vertex in $B$.
    To see that $G'$ has radius~$3$ and diameter~$4$ note first that this is an increase of~$1$ compared to the radius and diameter of $G$.
    Consider a path in $G$ which is used to show that the distance between $u$ and $v$ in $G$ is bounded.
    Note that the distance between $u_a$ and one of $v_a$ and $v_b$ equals $\dist(u,v)$.
    Since $v_a$ and $v_b$ are adjacent, we get that $\dist(u_a,v_a) \leq \dist(u,v) +1$ and $ \dist(u_a,v_b) \leq \dist(u,v) +1$.
    By symmetry, the same holds for $u_b$ and thus, $G'$ has radius~$3$ and diameter~$4$.

    We claim that $H$ has a vertex cover of size $k$ if and only if $G'$ has a matching cut of size~$\mu' = 2\mu$ where
    \[
2|V| + (1+|E|)k + 1 \leq \mu \leq 2|V| + (1 + |E|)k + |E| + 1
\]
as in the proof of Theorem~\ref{t-3p3}.
Note that by Observation~\ref{basic-observations} this is the case if and only if $G'$ has a red-blue colouring of value~$\mu'$.

First, suppose that $H$ has a vertex cover $S$ of size $k$.
We construct a red-blue colouring of $G$ of value~$\mu$ using the arguments from Theorem~\ref{t-3p3}.
For every vertex $u\in V$, we colour $u_a$ and $u_b$ in $V(G')$ with the same colour as~$u$.
This immediately leads to a valid red-blue colouring of $G'$ of value $2\mu = \mu'$ since every bichromatic edge $uv$ in $G$ connects two vertices $u$ and $v$ which do not belong to the same clique and each such edge corresponds to two edges $u_av_b$ and $u_bv_a$ in~$G'$.

Consider now a fixed red-blue colouring of $G'$ of value~$\mu'$.
Then, two vertices $u_a$, $u_b$ corresponding to the same vertex $u$ in $G$ are coloured the same since they are contained in a complete bipartite graph with partition classes of size at least~$3$.
Thus, the red-blue colouring of $G$ where a vertex $u$ has the same colour as $u_a$ in $G'$ is a red-blue colouring of $G$ of value~$\mu$.
Using the arguments from Theorem~\ref{t-3p3} we obtain a vertex cover $S$ of $H$ of size $k$.
\end{proof}

\section{Conclusion}
\label{sec:conc}

Combining our results with results from the literature, we obtain the following partial complexity classifications for \minmc{} and \mc.

\begin{theorem}[\cite{Ch84,FLPR25}]\label{t-h-dicho}
    For a graph $H$, \minmc{} on $H$-free graphs is 
    \begin{itemize}
        \item polynomial-time solvable if $H \subseteq_i sP_2 + S_{1,1,3}$, $sP_2 + P_8$, or $sP_2 + P_6 + P_4$, for some $s \geq 0$, 
        \item \NP-hard if $H \supseteq_i 3P_3, K_{1,4}, C_r, r\geq 3$, or $H_i^*, i \geq 1$.
    \end{itemize}
\end{theorem}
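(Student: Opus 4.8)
The final statement, Theorem~\ref{t-h-dicho}, is not an independent result requiring a fresh argument; rather, it is a bookkeeping assembly of results proved elsewhere in the paper together with two cited facts. Thus the plan is essentially to verify that each listed case follows from a theorem already established (or cited), and that the monotonicity under induced subgraphs makes the statement meaningful.

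\medskip

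For the polynomial-time side, I would proceed as follows. If $H \subseteq_i sP_2 + S_{1,1,2}$, then every $H$-free graph is $(sP_2 + S_{1,1,2})$-free, so it suffices to handle the class $(sP_2+S_{1,1,2})$-free. Starting from Theorem~\ref{t-s112}, which gives polynomial-time solvability for $S_{1,1,2}$-free graphs, I would apply Theorem~\ref{T-H+P2} a total of $s$ times: each application turns polynomial-time solvability for $H'$-free graphs into polynomial-time solvability for $(H'+P_2)$-free graphs, and since $(S_{1,1,2}) + sP_2$ is obtained by $s$ such additions, the claim follows. The same recipe works for $sP_2 + P_7$, starting from Theorem~\ref{t-p7}, and for $sP_2 + P_6 + P_4$, starting from Theorem~\ref{T-P6+P4free}. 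In each case one also notes the trivial base case $s=0$. Finally, one should remark that any $H$ with $H \subseteq_i J$ for one of these three graphs $J$ inherits the tractability, since $H$-freeness is implied by $J$-freeness only in the wrong direction — so actually the correct phrasing is that $J$-free graphs form a subclass; I would double-check the direction: a graph that is $H$-free need not be $J$-free, but here we want: if $H \subseteq_i J$ then the class of $J$-free graphs is contained in the class of $H$-free graphs, hence tractability for $H$-free implies tractability for $J$-free. Wait — the theorem asserts tractability for $H$-free graphs when $H \subseteq_i J$, so what we need is tractability for $H$-free whenever $H$ is an induced subgraph of one of the named graphs; this is immediate because $H$-free is a \emph{superclass}-restriction: no, it is cleaner to just say the three named graphs are themselves the maximal cases and $H \subseteq_i J$ gives $H$-free $\supseteq$ ($J$-free)-graphs so... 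I would simply cite the standard fact that for induced-subgraph-closed problems, tractability on $\mathcal{G}_J$-free transfers downward, and phrase the paragraph to match exactly the direction used in the literature.

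\medskip

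For the \NP-hardness side, the argument is the dual monotonicity: if $H \supseteq_i 3P_3$, then every $3P_3$-free graph is also $H$-free (since containing $3P_3$ as an induced subgraph is a stronger condition, the class of $H$-free graphs contains all $3P_3$-free graphs), so the \NP-hardness established for $3P_3$-free graphs in Theorem~\ref{t-3p3} immediately transfers to $H$-free graphs. The cases $H \supseteq_i K_{1,4}$, $H \supseteq_i C_r$ for $r \geq 3$, and $H \supseteq_i H_i^*$ are handled the same way by invoking, respectively, the $K_{1,4}$-free hardness of \cite{Ch84} and the $C_r$-free and $H_i^*$-free hardness results of \cite{FLPR23} (these carry over from \mc{} to \minmc{} because a yes-instance of \mc{} with a matching cut of size $\nu$ is detected by \minmc{} with threshold $\nu$, and the reductions in those papers produce instances whose matching cut sizes are controlled). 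I would spell out this ``carry-over'' observation once, since it is the only non-syntactic point on this side.

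\medskip

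The main obstacle — such as it is — is not mathematical depth but making sure the transfer directions are stated correctly and that the \NP-hardness results quoted for \mc{} genuinely apply to \minmc. Concretely, one must confirm that in the reductions of \cite{Ch84} and \cite{FLPR23}, the constructed graph has a matching cut if and only if it has one of a size lying in a computable range (or simply: has \emph{any} matching cut), so that an oracle for \minmc{} answers the \mc{} question. Since \minmc{} asks for the minimum-size matching cut, any algorithm for it in particular decides existence, so \NP-hardness of \mc{} on a graph class immediately yields \NP-hardness of \minmc{} on the same class — this is the one-line observation that closes all four hardness cases. I would therefore structure the proof as: (1) state the downward/upward monotonicity of $H$-freeness under induced subgraphs; (2) apply Theorems~\ref{t-s112}, \ref{t-p7}, \ref{T-P6+P4free} and iterate Theorem~\ref{T-H+P2} for the tractable side; (3) observe that \minmc{} decides \mc{}, then invoke Theorem~\ref{t-3p3} and the cited results for the hard side.
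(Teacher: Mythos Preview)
Your proposal is correct and matches the paper's approach: the paper gives no explicit proof of Theorem~\ref{t-h-dicho}, stating it in the conclusion as a direct combination of Theorems~\ref{t-s112}, \ref{T-P6+P4free}, \ref{t-p7}, \ref{T-H+P2}, \ref{t-3p3} and the cited hardness results, exactly as you outline. Your one-line observation that any \minmc{} algorithm decides \mc{} (hence \NP-hardness of \mc{} on a hereditary class transfers to \minmc{}) is precisely the mechanism the paper alludes to in the introduction when it says the \mc{} hardness results ``carry over''; do clean up the monotonicity paragraph, though, since the correct direction is simply that $H\subseteq_i J$ implies every $H$-free graph is $J$-free, so tractability for $J$-free graphs yields tractability for $H$-free graphs, and dually $H\supseteq_i J$ implies $J$-free $\subseteq$ $H$-free, so hardness for $J$-free graphs yields hardness for $H$-free graphs.
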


\begin{theorem}[\cite{Ch84,FLPR25,LL26,LPR22,LPR23a,Mo89}]
For a graph $H$, \mc{} on $H$-free graphs is 
\begin{itemize}
    \item polynomial-time solvable if $H \subseteq_i sP_3 + S_{1,1,3}$, $sP_3 + P_6 + P_4$ or $sP_3 + P_8$, for some $s \geq 0$, 
    \item NP-complete if $H \supseteq_i K_{1,4} , P_{14} , 2P_7 , 3P_5 , C_r$, $r \geq 3$, or $H_i^*$, $i \geq 1$.
\end{itemize} 
\end{theorem}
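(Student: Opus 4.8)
This theorem is a synthesis: it records the polynomial/hardness frontier obtained by combining the new algorithms of this paper with prior work, so the plan is mostly bookkeeping. First I would isolate the monotonicity principle that does the work. If $H \subseteq_i F$ then every $H$-free graph is $F$-free, so any polynomial-time algorithm for \mc{} on $F$-free graphs also decides \mc{} on $H$-free graphs; dually, if $F \subseteq_i H$ and \mc{} is \NP-hard on $F$-free graphs, it is \NP-hard on the larger class of $H$-free graphs. Since \mc{} is trivially in \NP, it therefore suffices to prove polynomial-time solvability for $H \in \{\, sP_3 + S_{1,1,2},\ sP_3 + P_6 + P_4,\ sP_3 + P_7 \,\}$ and \NP-completeness for $H$ equal to $K_{1,4}$, $P_{14}$, $2P_7$, $3P_5$, $C_r$ (for $r \ge 3$), or $H_i^*$ (for $i \ge 1$).

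For the polynomial-time items I would first observe that any polynomial-time algorithm for \minmc{} is in particular one for \mc{}, since a graph admits a matching cut exactly when its minimum matching cut has finite size. Hence Theorems~\ref{t-s112}, \ref{T-P6+P4free} and~\ref{t-p7} already settle the base classes $S_{1,1,2}$-free, $(P_6+P_4)$-free and $P_7$-free (the last subsuming the known $P_6$-free case). To move from an $F$-free class to the corresponding $(sP_3 + F)$-free class, I would apply $s$ times the known lifting of polynomial-time solvability of \mc{} from $F$-free graphs to $(F+P_3)$-free graphs~\cite{LPR22,FLPR23}. This yields all three polynomial-time cases.

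For the \NP-completeness items I would treat the extremal graphs one at a time, citing the relevant hardness construction: $K_{1,4}$-free from~\cite{Ch84}; $P_{14}$-free from~\cite{LL23}; $2P_7$-free, $3P_5$-free and $H_i^*$-free ($i \ge 1$) from~\cite{LPR23a}. For $C_r$ with $r \ge 3$ I would use that \mc{} is \NP-complete on graphs of girth greater than~$r$~\cite{FLPR23}: such graphs contain no cycle of length at most~$r$, in particular no induced $C_r$, hence are $C_r$-free, and the monotonicity principle transfers the hardness to every $H$ with $C_r \subseteq_i H$. Assembling all items through the two directions of the principle completes the argument.

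There is no genuinely hard step here once Theorems~\ref{t-s112}, \ref{T-P6+P4free} and~\ref{t-p7} are in hand; the points that require attention are purely organisational. One must keep the direction of the induced-subgraph monotonicity straight, and — the subtle point — ensure that $3P_3$ does \emph{not} appear among the hardness cases: \mc{} is polynomial-time solvable on $3P_3$-free graphs~\cite{LPR22}, and it is precisely on that class that \mc{} and \minmc{} part ways (cf.\ Theorem~\ref{t-3p3}). Finally, I would stress that the resulting classification is not a full dichotomy; for instance the status of \mc{} on $P_r$-free graphs for $8 \le r \le 13$, and on certain $sP_3 + {\cdot}$ combinations, remains open.
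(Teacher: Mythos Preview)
The paper does not give a proof of this theorem: it is stated in the conclusion as a summary classification, justified only by the preceding sentence ``Combining our results with results from~\cite{Ch84} and~\cite{FLPR23}~\ldots'' and the bracketed list of citations. Your proposal supplies precisely the bookkeeping the paper leaves implicit, and does so correctly: the monotonicity principle is stated with the right orientation, the three new polynomial base cases are derived from Theorems~\ref{t-s112}, \ref{T-P6+P4free} and~\ref{t-p7} via the observation that a \minmc{} algorithm decides \mc{}, the extension to $sP_3+\cdot$ uses the known $+P_3$ lifting for \mc{}, and the hardness items are pulled from the cited sources (with the girth argument handling the $C_r$ family). This is the same approach the paper intends; there is nothing to correct.
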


\noindent
In both cases, the computational complexity remains open only for a constant number of graphs $H$, where every connected component of $H$ is either a path or a subdivided claw. For \minmc{}, $31$ cases remain open when $H$ is connected. Among them are the cases of $P_9$ and $P_{10}$-free graphs. Solving these would close the gap for $P_r$-free graphs. An extension of our techniques would require a third level of domination. Dealing with this seems challenging. Even if for example Lemma~\ref{L-monodom} can be generalised, it remains challenging to get to the situation where the coloured part of the graph dominates the uncoloured part. Hence, new ideas are needed. For \mc{}, even more cases remain open. The most interesting among them is probably the case of $P_r$-free graphs for $9 \leq r \leq 13$. 

For (bipartite) graphs of bounded radius and diameter, we can combine Theorems~\ref{T-P6},~\ref{t-3p3} and~\ref{t-biprad3} to obtain the following dichotomy for \minmc. However, for \mc{} the complexity on bipartite graphs of radius~$3$ remains open~(\cite{Lu25}).

\begin{theorem}
    \minmc{} is polynomial-time solvable for 
    \begin{itemize}
        \item graphs of radius~$r$, for $r \leq 1$, and diameter~$r$, for $r \leq 2$, and
        \item bipartite graphs of radius~$r$, for $ r \leq 2$, and diameter~$r$, for $r \leq 3$.
    \end{itemize}
    \minmc{} is \NP-complete for 
    \begin{itemize}
        \item graphs of radius~$r$, for $r \leq 2$, and diameter~$r$, for $r \leq 3$, and
        \item bipartite graphs of radius~$r$, for $ r \leq 3$, and diameter~$r$, for $r \leq 4$.
    \end{itemize}
\end{theorem}

\noindent
As a last problem, note that \maxmc{} is \NP-hard for graphs of maximum degree~$3$~(\cite{LPR24}), while every graph with at least~$7$ vertices and maximum degree~$3$ has a matching cut (\cite{Mo89}). It would thus be interesting to determine the complexity of \minmc{} on graphs of maximum degree~$3$.

 \bibliography{ref}

\end{document}